\documentclass[twoside,11pt]{article}
\usepackage{jmlr2e}

%

\usepackage{amsmath}
\usepackage{array}
\usepackage{longtable}
\usepackage{float}
\usepackage{lineno}
\usepackage{xcolor}
\usepackage{mdwlist}




\newcommand{\indep}{\rotatebox[origin=c]{90}{$\models$}}



\usepackage{lastpage}
\jmlrheading{21}{2020}{1-\pageref{LastPage}}{4/19; Revised
3/20}{7/20}{19-264}{Enguerrand Horel and Kay Giesecke}


\ShortHeadings{Significance Tests for Neural Networks}{Horel and Giesecke}
\firstpageno{1}

\begin{document}

\title{Significance Tests for Neural Networks}

\author{\name Enguerrand Horel \email enguerrand.horel@gmail.com \\
       \addr Institute for Computational and Mathematical Engineering\\
       Stanford University\\
       Stanford, CA 94305, USA
       \AND
       \name Kay Giesecke \email giesecke@stanford.edu \\
       \addr Department of Management Science and Engineering\\
       Stanford University\\
       Stanford, CA 94305, USA}

\editor{Moritz Hardt}

\maketitle

\begin{abstract}
We develop a pivotal test to assess the statistical significance of the feature variables in a single-layer feedforward neural network regression model. We propose a gradient-based test statistic and study its asymptotics using nonparametric techniques. Under technical conditions, the limiting distribution is given by a mixture of chi-square distributions. The tests enable one to discern the impact of individual variables on the prediction of a neural network. The test statistic can be used to rank variables according to their influence. Simulation results illustrate the computational efficiency and the performance of the test. An empirical application to house price valuation highlights the behavior of the test using actual data. 
\end{abstract}

\begin{keywords}
significance test, neural network, model interpretability, nonparametric regression, nonlinear regression, feature selection
\end{keywords}

\section{Introduction}
\label{S:1}

Neural networks underpin many of the best-performing artificial-intelligence systems, including speech recognizers on smartphones or Google’s latest automatic translator. The tremendous success of these applications has spurred the interest in applying neural networks in a variety of other fields, including medicine, physics, economics, marketing, and finance. However, the difficulty of interpreting a neural network has slowed the practical implementation of neural networks in these fields, where the major stakeholders often insist on the explainability of predictive models.


This paper approaches the neural network explainability problem from the perspective of statistical significance testing. We develop and analyze a pivotal test to assess the statistical significance of the feature variables of a single-layer, fully connected feedforward neural network that models an unknown regression function.\footnote{We consider a setting with structured data but the approach presented here could also be potentially used with images or text data.} We construct a gradient-based test statistic that represents a weighted average of the squared partial derivative of the neural network estimator with respect to a variable. The weights are prescribed by a positive measure that can be chosen freely. We study the large-sample asymptotic behavior of the test statistic. The neural network estimator is regarded as a nonparametric sieve estimator.\footnote{See \cite{chen2007large} for an overview.}  The dimension of the network (i.e., the number of hidden units) grows with the number of samples. Using an empirical-process approach, we first show that the large-sample asymptotic distribution of the rescaled neural network sieve estimator is given by the argmax of a Gaussian process. The Gaussian process is indexed by the function space that contains the unknown regression function. The rescaling is given by the estimation rate of the neural network identified by \cite{chen1998sieve}. A second-order functional delta method is then used to obtain the asymptotic distribution of the test statistic as the weighted average of the squared partial derivative of the argmax of the Gaussian process with respect to the variable of interest. The empirical test statistic, which results from a weight measure equal to the empirical law of the feature variables, has the same asymptotics. Moreover, under mild assumptions, the asymptotic distribution can be represented by a mixture of chi-square distributions. We develop several approaches to compute the limiting law; these and the performance of the test are illustrated in a simulation study. An empirical application to house price valuation in the United States complements our theoretical results and highlights the behavior of the test using actual data. 

The significance test we propose has several desirable characteristics. First, the test procedure is computationally efficient. The partial derivative underpinning the test statistic is basically a byproduct of the widely used gradient-based fitting algorithms and is provided by standard software packages used for fitting neural networks (e.g., TensorFlow). Furthermore, the test procedure does not require re-fitting the neural network. Second, the test statistic can be used to rank order the variables according to their influence on the regression outcome. Third, the test is not susceptible to the non-identifiability of neural networks.\footnote{See, for example,  \cite{chen1993geometry} for a discussion of identifiability.}  Forth, the test facilitates model-free inference: in the large-sample asymptotic regime, due to the universal approximation property of neural networks (\citealt{hornik1989multilayer}), we are  performing inference on the ``ground-truth" regression function. Finally, the test statistic can be extended to higher-order and cross derivatives, facilitating tests of the statistical significance of nonlinear features such as interactions between variables.  

The rest of the paper is organized as follows. The remainder of the introduction discusses the related literature. Section \ref{S:2} discusses the model, hypotheses and test statistic. Section \ref{S:3} analyzes the large-sample asymptotic distribution of the test statistic. Section \ref{S:4} provides several approaches to compute the limiting distribution. Section \ref{S:5} analyzes the performance of the test in a simulation study. Section \ref{S:6} develops an empirical application to house prices valuation. Section \ref{S:7} concludes. There are several appendices, one containing the proofs of our technical results.

\subsection{Related Literature}
There is a growing literature on explaining black-box machine learning models using feature importance analysis (also known as saliency mask or sensitivity analysis). \cite{guidotti2018survey} provides a survey.  Several papers develop procedures to explain the prediction of a model {\it locally} at a specific instance. \cite{shrikumar2017learning} decomposes the neural network prediction for a specific input by backpropagating the contributions of all neurons in the network to every input feature. \cite{kononenko2010efficient} explains individual predictions of classification models using the game theoretic concept of Shapley value.  \cite{ribeiro2016should} explains the predictions of a classifier by learning an interpretable model locally around the prediction. \cite{baehrens2010explain} provides local explanation vectors for classification models by computing class probability gradients. These methods are limited to local explanations and cannot provide a global understanding of model behavior. 

In contrast, a second group of methods aim for understanding how the model works overall. \cite{datta2016algorithmic} measures the influence of an input on a quantity of interest by computing the difference in the quantity of interest when the data are generated according to the real distribution and when they are generated according to a hypothetical distribution designed to disentangle the effect of different inputs. \cite{sundararajan2017axiomatic} attributes the prediction of a deep network to its input features by integrating gradients of the model along the straightline path from a baseline value of the input to its current value. \cite{cortez2011opening} assesses the importance of inputs through sensitivity analysis. While similar to our approach in terms of using network gradients, these methods focus on feature importance and do not treat statistical significance. We, in contrast, develop a procedure that establishes the statistical significance of input features. Once significance has been established, the magnitude of the test statistic can be used to judge relative feature importance for the set of significant features. This can then facilitate feature selection.

There is prior work treating the significance of variables in neural network regression models. Regarding neural networks as parametric formulations, \cite{olden2002illuminating} propose a randomization test for the significance of the model's parameters. This is combined with the weight-based metric of feature importance introduced by \cite{Garson1991} to test for the statistical significance of input variables. However, this metric is not necessarily identifiable due to the non-identifiability of neural networks.  

The large-sample asymptotic results for the estimators of neural network parameters developed by \cite{white1989learning} and \cite{white1989some} can be used to develop significance tests for the network weights and the variables. \cite{refenes1999neural} construct a test statistic for variable significance whose distribution is estimated by sampling from the asymptotic distribution of the parametric neural network estimator.  \cite{white2001statistical} develop expressions for the asymptotic distributions of two test statistics for variable significance. A bootstrap procedure is used to estimate these distributions.

Likelihood ratio tests could also be used to assess variable significance in a parametric setting. One would fit an unconstrained model incorporating all variables and nested ones with restricted variables sets. If the unconstrained model is assumed to be correctly specified, then we have standard chi-square asymptotics. Theorem 7.2 from \cite{vuong1989likelihood} provides the asymptotic distribution under mis-specification, a weighted sum of chi-square distributions parametrized by the eigenvalues of a $p\times p$ matrix where $p$ is the number of parameters of the network. While appealing, this approach tends to be computationally challenging because $p$ is typically large and one needs to re-fit the neural network multiple times.

Alternatively, neural networks can be regarded as nonparametric models, and this is the approach we pursue. This approach has several advantages over the parametric setting considered by the aforementioned authors. First, viewing a neural network as a mapping between input variables and regression output, identifiability is no longer an issue. Moreover, by not restricting the dimension of the network (i.e., the number of hidden nodes) a priori, we can exploit the consistency of neural networks (\citealt{white1990connectionist}) which is due to their universal approximation property. Model misspecification is no longer an issue. 

Nonparametric variable significance tests fall in two categories: goodness-of-fit tests or conditional moment tests; see Section 6.3 of \cite{henderson2015applied} for a review. Goodness-of-fit tests are based on the residuals from an unrestricted estimator fitted using all variables and the residuals from a restricted estimator fitted using all variables except the one being tested. The required distributions are either estimated using the bootstrap, or approximated asymptotically for the specific case of kernel regressions (see, for example, \citealt{gozalo1993consistent}, \citealt{lavergne1996nonparametric}, \citealt{yatchew1992nonparametric} and \citealt{fan2005nonparametric}). Because the tests require the estimation of a potentially large number of models, they can be computationally challenging, especially if employing the bootstrap. Conditional moment tests rely on the observation that the expectation of the residual of the restricted estimator conditional on all variables is zero under the null hypothesis of insignificance of the variables being tested. \cite{fan1996consistent}, \cite{lavergne2000nonparametric}, \cite{gu2007bootstrap}, and others study these tests in the context of kernel regressions. 

\cite{racine1997consistent} proposes, again for kernel regressions, a nonparametric variable significance test based on the partial derivative of the fitted regression function with respect to a variable. The distribution of the test statistic is estimated using the bootstrap. Our test is also based on partial derivatives but we analyze a neural network regression rather than a kernel regression. Unlike \cite{racine1997consistent}, we study the asymptotic distribution of the test statistic in order to avoid the use of the bootstrap, which tends to be computationally expensive in the context of neural network models. The large data sets that are often used in practice to estimate neural network regressions justify the use of the asymptotic distribution for the purpose of significance testing.

\section{Model, Hypotheses, and Test Statistic}
\label{S:2}

Fix a probability space $(\Omega, \mathcal{F}, \mathbb{P})$ and consider the following regression model:
\begin{equation}
\label{regression setting}
    Y = f_0(X) + \epsilon.
\end{equation}
Here, $Y \in \mathbb{R}$ is the dependent variable, $X \in \mathcal{X} \subset \mathbb{R}^d$ is a vector of regressor or feature variables with distribution $P$ for some $d \ge 3$, and $f_0: \mathcal{X} \to \mathbb{R}$ is an unknown deterministic regression function that is continuously differentiable. The error variable $\epsilon$ satisfies the usual assumptions: $\epsilon\indep X$, $\mathbb{E}(\epsilon)= 0$, and $\mathbb{E}(\epsilon^2)= \sigma^2<\infty$.

We are interested in assessing the influence of a variable $X_j$ on $Y$. To this end, it is natural to consider the partial derivative of $f_0(x)$ with respect to $x_j$. If $\frac{\partial f_0(x)}{\partial x_j} = 0$ for all $x \in \mathcal{X}$, then the $j$th variable does not have an impact on $Y$. The significance test we propose is based on the weighted average of the partial derivative, with weights defined by a positive measure $\mu$. Specifically, we will consider the following hypotheses
\begin{align}\label{null hypothesis 2}
    H_0: \lambda_j & = \int_{\mathcal{X}} \Big(\frac{\partial f_0(x)}{\partial x_j}\Big)^2d\mu(x) = 0,\\ 
    H_A: \lambda_j & \neq 0. 
\end{align}
We take the square of the partial derivative to avoid compensation of positive and negatives values, ensure differentiability, and help discriminate between large and small values.  Examples of the weight measure $\mu$ include uniform weights, $d\mu(x) = dx$, uniform weights over subsets $I \subset \mathcal{X}$, $d\mu(x) = 1_{\{x \in I\}}dx$, and the choice $\mu = P$, where $P$ is the distribution of $X$. Under the latter choice, $\lambda_j$ takes the form $\lambda_j = \mathbb{E}[(\frac{\partial f_0(x)}{\partial x_j})^2]$.

If the regression function $f_0$ is assumed to be linear, i.e., $f_0(x)=\sum_{k=1}^d \beta_kx_k$, then $\lambda_j\propto \beta_j^2$ and the null takes the form $H_0: \beta_j=0$. This hypothesis can be tested using a standard $t$-test. In the general non-linear case, the derivative $\frac{\partial f_0(x)}{\partial x_j}$ is not flat but depends on $x$. To construct the null in this case, we take a weighted average of the squared values of the derivative, with $\mu$ prescribing the weights over the feature space $\mathcal{X}$.  

We study the case where the regression function $f_0$ is modeled by a fully-connected, single-layer feed-forward neural network $f$, which is specified by a bounded {activation function} $\psi$ on $\mathbb{R}$ and the number of {hidden units} $K$,
\begin{equation}\label{NN}
f(x) = b_0 + \sum_{k=1}^{K} b_k \psi(a_{0,k} + a_k^\top x),
\end{equation}
where $b_0, b_k, a_{0,k} \in \mathbb{R}$ and $a_k \in \mathbb{R}^d$ are parameters to be estimated. Functions of the form (\ref{NN}) are dense in $C(\mathcal{X})$. That is, they are {universal approximators}: choosing the dimension $K$ of the network large enough, $f$ can approximate $f_0$ to any given precision.


Let $f_n$ be an estimator of $f$ based on $n$ i.i.d. samples of $(Y,X)$. The neural network test statistic $\lambda_j^n$ associated with (\ref{null hypothesis 2}) takes the form
\begin{equation}
\label{test statistic}
    \lambda_j^n = \int_{\mathcal{X}} \Big(\frac{\partial f_n(x)}{\partial x_j}\Big)^2d\mu(x).
\end{equation}
Below, we are going to study the asymptotic distribution of $\lambda_j^n$ as $n\to\infty$ under the assumption that the dimension $K=K_n$ of the neural network grows with $n$. The asymptotic distribution will be used to construct a test for the null in (\ref{null hypothesis 2}). This approach is typically less expensive computationally than a bootstrap approach. Note that in the asymptotic regime, due to the universal approximation property, we are actually performing inference on the ``ground truth" $f_0$ (i.e., model-free inference).

\section{Asymptotic Analysis}
\label{S:3}

The neural network test statistic takes the form $\lambda_j^n = \phi[f_n]$, where $\phi$ is the functional
\begin{equation}
\label{functional def}
    \phi[h] = \int_{\mathcal{X}} \Big(\frac{\partial h(x)}{\partial x_j}\Big)^2d\mu(x).
\end{equation}
Thus, we first study the asymptotic distribution of $f_n$, and then infer the asymptotic distribution of $\lambda_j^n$ under $H_0$ from that of $f_n$ using a functional delta method.


\subsection{Asymptotics of Neural Network Estimator}
\label{SS:3.1}
We consider $f_n$ as a nonparametric estimator. This estimator asymptotically approximates the unknown function $f_0$ which is assumed to belong to an infinite-dimensional function space $\Theta$. Since infinite-dimensional estimation does not necessarily lead to a well-posed optimization problem, we construct $f_n$ as a sieve estimator. Intuitively, the method of sieves entails the optimization of an empirical criterion over a sequence of approximating spaces called sieves which are less complex than but dense in the original infinite-dimensional space. This approach renders the resulting optimization problem well-posed. More details can be found in \cite{chen2007large}.

The unknown regression function $f_0$ is assumed to belong to the function space
\begin{equation}
\label{def function space}
    \Theta = \Big\{f \in \mathcal{C}^1, f:\mathcal{X} \subset \mathbb{R}^d \to \mathbb{R}, ||f||_{\lfloor \frac{d}{2}\rfloor+2} \leq B\Big\},
\end{equation}
where $B$ is a constant, the feature space $\mathcal{X}$ is a hypercube of dimension $d$ and  
\begin{equation}
\label{definition sobolev norm}
    ||f||_{k} = \sqrt{\sum_{0 \leq |\alpha| \leq k} \mathbb{E}_{X}\big( \nabla^{\alpha}f(X)^2\big)}. 
\end{equation}

We construct the neural network estimator $f_n$ as a sieved M-estimator. Given $n$ i.i.d. samples $(Y_i,X_i)_{i=1}^n$ of the output-input pair $(Y,X)$ and a nested sequence of sieve subsets $\Theta_n \subseteq \Theta$ such that $\cup_{n \in \mathbb{N}} \Theta_n$ is dense in $\Theta$, the neural network estimator $f_n$ is defined as the approximate maximizer of the empirical criterion function 
\begin{equation}
\label{empirical criterion}
   L_n(g) = \frac{1}{n}\sum_{i=1}^n l(Y_i,X_i,g), \quad g \in \Theta,
\end{equation}
with loss function $l: \mathbb{R}\times\mathcal{X}\times\Theta \to \mathbb{R}$ over the sieve space $\Theta_n$, i.e., 
\begin{equation}
\label{estimator definition}
    L_n(f_n) \geq \sup_{g \in \Theta_n} L_n(g) - o_P(1).
\end{equation}
In our setting, a sieve subset $\Theta_n$ is generated by a neural network whose dimension $K=K_n$ depends on $n$ and whose activation function $\psi$ is a Lipschitz sigmoid function, i.e., a bounded measurable function on $\mathbb{R}$ such that $\psi(x) \to 1$ as $x \to \infty$ and $\psi(x) \to 0$ as $x \to -\infty$, 
\begin{equation}
\label{sieve space}
       \Theta_n = \Bigg\{f \in \Theta: f(x) = b_0 + \sum_{k=1}^{K_n} b_j \psi(a_{0,k} + a_k^\top x),\sum_{k=0}^{K_n} |b_k| \leq c, \max_{1 \leq k \leq K_n}\sum_{i=0}^d |a_{i,k}|\leq \tilde{c}\Bigg\}.
\end{equation}
The upper bound on the norm of the weights $b_j$'s allows us to view $\Theta_n$ as the symmetric convex hull of the class of sigmoid functions $\{\psi(a^\top \cdot + a_0), a \in \mathbb{R}^d, a_0 \in \mathbb{R}\}$. It hence ensures that the integral of the metric entropy of this class of functions is bounded for every $n$ due to Theorem 2.6.9 of \cite{van1996weak}. This property is necessary for Theorem \ref{theorem: NN distribution} below and the reason why we restrict our analysis to a single-hidden layer network with sigmoid activation function. Indeed, the most recent bounds on the metric entropy of more general deep neural networks (\citealt{barron2019complexity}) violate this property.
\cite{cybenko1989approximation} proves the denseness of $\cup_{n \in \mathbb{N}} \Theta_n$ in $\Theta$.

\begin{theorem}[Asymptotic distribution of neural network estimator]
\label{theorem: NN distribution}

Assume that
\begin{enumerate}
    \item The law $P$ of the feature vector $X$ is absolutely continuous with respect to Lebesgue measure with bounded and strictly positive density;
    \item The loss function $l(y, x,g) = -\frac{1}{2}(y - g(x))^2$;
    \item The dimension $K_n$ of the neural network satisfies $K_n^{2+1/d}\log K_n = O(n)$;
\item Letting $l_f(X,\epsilon) = 2(f-f_0)(X)\epsilon - (f_0-f)^2(X)$ where $f_0$ is the regression function and $\epsilon$ the regression error in \eqref{regression setting}, it holds that
\begin{multline*}
\frac{r_n}{\sqrt{n}} \sum_{i=1}^n \Big(l_{f_n}(X_i, \epsilon_i) -\mathbb{E}[l_{f_n}(X,\epsilon)] \Big) 
\\ \geq  \sup_{h \in \Theta} \frac{r_n}{\sqrt{n}} \sum_{i=1}^n \Big(l_{f_0 + \frac{h}{r_n}}(X_i, \epsilon_i) - \mathbb{E}[l_{f_0 + \frac{h}{r_n}}(X,\epsilon)] \Big)  -o_P(1)
\end{multline*}
where for all $n \geq 2$
\begin{equation}
\label{estimation rate}
    r_n = \Big(\frac{n}{\log n}\Big)^{\frac{d+1}{2(2d+1)}}.
\end{equation}
\end{enumerate}
Then $f_n$ converges weakly in the metric space $(\Theta,d)$ with $d(f,g) = \mathbb{E}[(f-g)^2]$:
\begin{equation}
\label{distribution of NN estimator}
    r_n(f_n - f_0) 	\Longrightarrow h^{\star},
\end{equation}
where $h^{\star}$ is the argmax of the Gaussian process $\{\mathbb{G}_f: f \in \Theta\}$ with mean zero and covariance function $Cov(\mathbb{G}_s,\mathbb{G}_t) = 4\sigma^2\mathbb{E}_{X}(s(X)t(X))$. 
\end{theorem}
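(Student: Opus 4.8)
\quad The plan is to exhibit $h_n := r_n(f_n-f_0)$ as an approximate maximizer of a sequence of criteria converging weakly to the Gaussian process $\mathbb{G}$, and then apply an argmax continuous-mapping theorem (e.g., Theorem 3.2.2 of \cite{van1996weak}). Because the loss is quadratic and $Y = f_0(X)+\epsilon$, one computes $l_f(X,\epsilon) = 2\big(l(Y,X,f)-l(Y,X,f_0)\big)$, so up to the $o_P(1)$ slack maximizing $L_n$ over the sieve is the same as maximizing $g\mapsto\sum_i l_g(X_i,\epsilon_i)$. Reparametrizing $g = f_0 + h/r_n$ and using that $\epsilon$ has mean zero and is independent of $X$, the centered, rescaled criterion becomes
\begin{equation*}
Z_n(h) \;=\; \frac{r_n}{\sqrt{n}}\sum_{i=1}^n\Big(l_{f_0+h/r_n}(X_i,\epsilon_i)-\mathbb{E}[l_{f_0+h/r_n}(X,\epsilon)]\Big) \;=\; \frac{2}{\sqrt{n}}\sum_{i=1}^n h(X_i)\epsilon_i \;-\; \frac{1}{r_n}\sqrt{n}\,(\mathbb{P}_n-P)[h^2],
\end{equation*}
where $\mathbb{P}_n$ is the empirical measure of $X_1,\dots,X_n$. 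Assumption 4 is exactly the statement that $h_n$ maximizes $Z_n$ over $\Theta$ up to $o_P(1)$, so the work left is (i) the weak convergence of $Z_n$, (ii) identification of the limit's argmax, and (iii) tightness of $h_n$.

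For (i) I would show $Z_n\Rightarrow\mathbb{G}$ in $\ell^\infty(\Theta)$. The leading term is the empirical process $\sqrt{n}(\mathbb{P}_n-P)[2h(\cdot)\epsilon]$ indexed by $h\in\Theta$. Since $\Theta$ is a ball in the Sobolev space of smoothness $s:=\lfloor d/2\rfloor+2 > d/2$ on a bounded hypercube, it embeds into $\mathcal{C}(\mathcal{X})$ (so it is uniformly bounded) and its $L^2(P)$ bracketing entropy is $O\big(\varepsilon^{-d/s}\big)$ with exponent $d/s<2$; hence $\Theta$ is $P$-Donsker. Multiplying by $\epsilon$ only rescales the intrinsic $L^2$ metric by the constant $2\sigma$ and produces the square-integrable envelope $2\|\epsilon\|_{L^2}\sup_{h\in\Theta}\|h\|_\infty$, so the class $\{(x,e)\mapsto 2h(x)e : h\in\Theta\}$ is still Donsker and the leading term converges weakly to a tight mean-zero Gaussian process with covariance $\mathbb{E}[4s(X)t(X)\epsilon^2] = 4\sigma^2\mathbb{E}_X(s(X)t(X))$, i.e.\ to $\mathbb{G}$. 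The remainder is $o_P(1)$ uniformly on $\Theta$: $\{h^2:h\in\Theta\}$ is a Lipschitz image of a uniformly bounded Donsker class, hence Donsker, so $\sup_{h\in\Theta}|\sqrt{n}(\mathbb{P}_n-P)[h^2]| = O_P(1)$, while Assumption 3 forces $r_n\to\infty$.

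For (ii), three facts about $\mathbb{G}$: first, $(\Theta,d)$ is compact, since the density of $P$ is bounded above and below (so $d$ is equivalent to the $L^2$-Lebesgue metric) and a ball in $W^{s,2}$ is $L^2$-precompact by Rellich--Kondrachov and $L^2$-closed by lower semicontinuity of the Sobolev norm; second, $\mathbb{G}$ has $d$-continuous sample paths (it arises as the limit of a Donsker class, with intrinsic metric $2\sigma\,d$); third, $\mathbb{G}$ has almost surely a unique, hence well-separated, maximizer over $\Theta$. For the last point, bilinearity of the covariance gives $\mathbb{G}_{\alpha s+\beta t} = \alpha\mathbb{G}_s+\beta\mathbb{G}_t$ a.s., so on a countable dense subset of $\Theta$, and then by continuity on all of $\Theta$, the process coincides with a bounded linear functional on the Hilbert space $W^{s,2}$, with Riesz representative $\eta$; since $\mathbb{G}_{h_0}\sim\mathcal{N}(0,4\sigma^2\mathbb{E}_X[h_0(X)^2])$ is nondegenerate for any fixed nonzero $h_0\in\Theta$, we have $\eta\neq0$ a.s., whence the maximizer of $h\mapsto\langle\eta,h\rangle_{W^{s,2}}$ over the strictly convex ball $\Theta$ is the unique point $h^\star = B\eta/\|\eta\|_{W^{s,2}}$. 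Uniqueness of the maximizer on the compact set $\Theta$ upgrades to well-separatedness by a routine subsequence argument.

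Finally, combining these via the argmax theorem requires that $h_n$ be uniformly tight. For (iii) I would invoke the rate-of-convergence theory for sieve M-estimators (\cite{chen1998sieve}; see also \cite{chen2007large}): Assumptions 1--3 -- in particular the calibration $K_n^{2+1/d}\log K_n = O(n)$ of the sieve dimension, which balances the neural-network approximation error against the stochastic error at the rate $r_n$ -- yield $d(f_n,f_0) = O_P(r_n^{-1})$, and, with control in a slightly stronger Sobolev norm, that $h_n$ concentrates on a compact set. Together with $Z_n\Rightarrow\mathbb{G}$, the a.s.\ unique well-separated argmax $h^\star$, and the near-maximality of $h_n$ from Assumption 4, the argmax continuous-mapping theorem then delivers $r_n(f_n-f_0)\Rightarrow h^\star$. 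I expect the main obstacle to be this tightness/localization step -- making precise, from the sieve rate, that $h_n$ eventually lives in (a compact subset of) the space where convergence is claimed, so the argmax theorem genuinely applies -- together with the careful verification of the Donsker property and its preservation under multiplication by $\epsilon$; the a.s.\ uniqueness of the argmax, via strict convexity of the Sobolev ball, is by contrast the cleanest of the technical ingredients.
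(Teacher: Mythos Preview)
Your proposal is correct and follows the same high-level architecture as the paper: obtain the rate $r_n$ from \cite{chen1998sieve} for tightness, establish weak convergence of the localized, centered criterion to $\mathbb{G}$, verify that $\mathbb{G}$ attains a unique maximum on the compact set $\Theta$, and conclude via the argmax continuous-mapping theorem (Theorem~3.2.2 of \cite{van1996weak}), with Assumption~4 supplying the near-maximality of $h_n$.

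Two of your technical sub-arguments differ from the paper's, and both are worth flagging. First, for the weak convergence of $Z_n$, the paper keeps the criterion intact as the $n$-dependent class $f_{n,h}=2h(X)\epsilon-h(X)^2/r_n$ and invokes the triangular-array CLT of \cite[Theorem~2.11.23]{van1996weak}, checking envelope, Lindeberg, equicontinuity and bracketing-entropy conditions directly. Your split into a fixed Donsker leading term $2h(X)\epsilon$ plus a uniformly $o_P(1)$ remainder $r_n^{-1}\sqrt{n}(\mathbb{P}_n-P)[h^2]$ is a legitimate shortcut that avoids the changing-classes machinery; the only point to make airtight is the Donsker property of $\{(x,e)\mapsto h(x)e:h\in\Theta\}$, which follows because an $L^\infty$ $\delta$-cover of $\Theta$ induces an $L^2(P)$ $\sigma\delta$-cover of the product class (this is exactly the inequality the paper uses to pass from bracketing numbers of $\mathcal{F}_n$ to covering numbers of $\Theta$). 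Second, for uniqueness of the argmax the paper appeals to \cite[Lemma~2.6]{kim1990cube} via the variance condition $\mathrm{Var}(\mathbb{G}_s-\mathbb{G}_t)=4\sigma^2\|s-t\|_{L^2(P)}^2\neq0$, whereas you exploit linearity of $\mathbb{G}$ and the Riesz representation to identify $h^\star$ as the unique maximizer of a linear functional over the strictly convex Sobolev ball. Your argument is more informative---it gives the explicit form $h^\star=B\eta/\|\eta\|$---and is in fact the device the paper deploys later in the proof of Theorem~\ref{theorem: new expression limiting distribution}; the paper's route here is shorter but less constructive. Your caveat about the tightness/localization step is apt: the paper, like you, simply imports $\|f_n-f_0\|_{L^2(P)}=O_P(r_n^{-1})$ from \cite{chen1998sieve} and does not spell out why $h_n$ stays in a $\Theta$-compact set beyond this rate.
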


Theorem \ref{theorem: NN distribution} derives the asymptotic distribution of the neural network estimator in a random function sense using the theory of empirical processes (\citealt{van1996weak}). It complements several other convergence results for neural network estimators, including the consistency result of \cite{white1990connectionist} and results on the estimation rate and asymptotic normality of functionals of neural network estimators in \cite{shen1994convergence}, \cite{shen1997methods}, \cite{chen1998sieve} and \cite{chen1999improved}.  

\subsection{Asymptotic Distribution of Test Statistic}
\label{SS:3.2}

Using Theorem \ref{theorem: NN distribution}, we can now derive the distribution of the test statistic under the null hypothesis using a functional delta method. A second order approach is needed since a first-order method would lead to a degenerate limiting distribution under the null. 

\begin{theorem}[Asymptotic distribution of the test statistic]
\label{theorem: test statistic distribution}
Under the conditions of Theorem \ref{theorem: NN distribution} and the null hypothesis $H_0$, we obtain:
\begin{equation}\label{limit}
    r_n^2 \lambda_j^n 
    \Longrightarrow \int_{\mathcal{X}}\Big(\frac{\partial h^{\star}(x)}{\partial x_j}\Big)^2d\mu(x).
\end{equation}
\end{theorem}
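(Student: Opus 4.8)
The plan is to carry out the second-order functional delta method announced just before the statement. Because the functional $\phi$ of \eqref{functional def} is a quadratic form in the partial derivative $\partial h/\partial x_j$, and because its linear part at $f_0$ vanishes under $H_0$, the rescaled statistic $r_n^2\lambda_j^n$ is \emph{identically} equal to $\phi$ evaluated at the rescaled estimator $r_n(f_n-f_0)$; the result is then obtained by pushing the weak convergence of Theorem~\ref{theorem: NN distribution} through $\phi$ via the continuous mapping theorem. There are two substantive steps.

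First, the algebraic reduction. Set $h_n := r_n(f_n - f_0)$, so that $f_n = f_0 + r_n^{-1}h_n$. Expanding the square inside $\phi$,
\begin{equation*}
r_n^2\lambda_j^n = r_n^2\,\phi[f_0] + 2r_n\int_{\mathcal X}\frac{\partial f_0(x)}{\partial x_j}\frac{\partial h_n(x)}{\partial x_j}\,d\mu(x) + \phi[h_n].
\end{equation*}
Under $H_0$ we have $\phi[f_0]=\lambda_j=\int_{\mathcal X}(\partial f_0/\partial x_j)^2\,d\mu=0$; since the integrand is nonnegative, this forces $\partial f_0/\partial x_j=0$ for $\mu$-a.e.\ $x$ (and everywhere when $\mu=P$, using $f_0\in\mathcal C^1$ and the strictly positive density of Assumption~1). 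Hence the first term vanishes, and the cross term has a $\mu$-a.e.-zero integrand, so it vanishes as well, leaving the exact identity $r_n^2\lambda_j^n=\phi[h_n]=\phi\big[r_n(f_n-f_0)\big]$. This is precisely the place where a first-order method is degenerate: the linear term is the only thing it would retain, and it is zero under $H_0$, so one must keep the quadratic term $\phi[h_n]$.

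Second, the limit. By Theorem~\ref{theorem: NN distribution}, $h_n\Longrightarrow h^\star$ in $(\Theta,d)$ with $d(f,g)=\mathbb E[(f-g)^2]$, and $h^\star$ takes values in $\Theta$ almost surely. It therefore suffices to show that $\phi$ is continuous on $\Theta$ with respect to $d$ and invoke the continuous mapping theorem, which gives $\phi[h_n]\Longrightarrow\phi[h^\star]=\int_{\mathcal X}(\partial h^\star/\partial x_j)^2\,d\mu$, i.e.\ \eqref{limit}. Continuity of $\phi$ on $\Theta$ is the crux. On $L^2(P)$ the functional $\phi$ is \emph{not} continuous, since it depends on the derivative of its argument. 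But $\Theta$ from \eqref{def function space} is a bounded ball in the Sobolev space of order $\lfloor d/2\rfloor+2$ on the hypercube $\mathcal X$ — here the $\mathbb E_X$-weighted norm \eqref{definition sobolev norm} is equivalent to the Lebesgue Sobolev norm because the density is bounded and bounded away from zero — and the Rellich–Kondrachov theorem makes this ball compactly embedded in $C^1(\mathcal X)$. Consequently, on $\Theta$, $d$-convergence implies $C^1(\mathcal X)$-convergence (every subsequence has a sub-subsequence converging in $C^1$, necessarily to the same limit by uniqueness of $L^2$ limits), hence uniform convergence of the $x_j$-derivatives, hence — $\mu$ being finite on the compact $\mathcal X$ — convergence of $\phi$. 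The same compact embedding shows $\Theta$ is $d$-compact (it is $d$-precompact, and $d$-closed by weak compactness of Sobolev balls), so the tightness and measurability hypotheses of the continuous mapping theorem are met. This is exactly why $\Theta$ was defined with Sobolev smoothness $\lfloor d/2\rfloor+2$ rather than merely $\mathcal C^1$.

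The main obstacle is this last point: reconciling the derivative-dependence of $\phi$ with the fact that the weak convergence in Theorem~\ref{theorem: NN distribution} is only in the coarse $L^2(P)$-metric $d$. Everything hinges on staying inside $\Theta$ and using its compactness in $C^1$; one should also check with some care that $h_n=r_n(f_n-f_0)$ — which a priori need not sit in the ball $\Theta$ — is asymptotically confined there, which is built into the argmax characterization in condition~4 of Theorem~\ref{theorem: NN distribution} together with the tightness of the limit, so that the continuous mapping theorem may legitimately be applied at the points where $h^\star$ lives.
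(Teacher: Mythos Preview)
Your proposal is correct and follows the same second-order delta-method strategy as the paper, but your execution is more explicit and, in one respect, more complete. The paper's proof simply cites the second-order functional delta method of R\"omisch and records the Hadamard second derivative $\phi''_{f_0}(h)=2\int_{\mathcal X}(\partial h/\partial x_j)^2\,d\mu$, without further justification. Your algebraic reduction is equivalent to this---because $\phi$ is a pure quadratic form with no remainder, the Hadamard expansion is exact and your identity $r_n^2\lambda_j^n=\phi[h_n]$ is precisely the statement that the zeroth- and first-order terms vanish under $H_0$.

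Where you go beyond the paper is in arguing continuity of $\phi$ on $(\Theta,d)$ via the Rellich--Kondrachov embedding of the Sobolev ball into $C^1(\mathcal X)$. The paper's proof does not address this point at all; it is left implicit in the citation of the delta-method theorem. Your argument explains \emph{why} the Sobolev index $\lfloor d/2\rfloor+2$ in the definition \eqref{def function space} of $\Theta$ is the right one, and it is the honest way to reconcile the $L^2(P)$-topology of Theorem~\ref{theorem: NN distribution} with the derivative-dependence of $\phi$. The caveat you flag at the end---that $h_n=r_n(f_n-f_0)$ must be shown to live in $\Theta$ (or a fixed Sobolev ball) with probability tending to one---is a genuine loose end that the paper's proof also leaves unaddressed; it is needed for either argument to go through.
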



In the case where the weight measure $\mu$ is equal to the law $P$ of the feature vector $X$, the test statistic takes the form
\begin{equation}\label{emp stat}
     \lambda^n_j = \int_{\mathcal{X}}\Big(\frac{\partial f_n(x)}{\partial x_j}\Big)^2dP(x)= \mathbb{E}_X\Big[\Big(\frac{\partial f_n(X)}{\partial x_j}\Big)^2\Big].
\end{equation}
In this case it is natural to estimate the expectation in \eqref{emp stat} using the empirical measure $P_n$ of the samples $\{X_i\}_{i=1}^n$,
\begin{equation}
\label{empirical test statistic}
    \hat{\lambda}_j^n = \int_{\mathcal{X}} \Big(\frac{\partial f_n(x)}{\partial x_j}\Big)^2dP_n(x) = n^{-1}\sum_{i=1}^n \Big(\frac{\partial f_n(X_i)}{\partial x_j}\Big)^2.
\end{equation}
The following result shows that the asymptotic distribution of the empirical test statistic $\hat{\lambda}_j^n$ is the same as that of $\lambda_j^n$ (when $\mu=P$).

\begin{proposition}[Asymptotic distribution of the empirical test statistic]
\label{corollary: empirical test statistic distribution}
Under the conditions of Theorem \ref{theorem: test statistic distribution}, we have
\begin{equation*}
    r_n^2 n^{-1} \sum_{i=1}^n \Big(\frac{\partial f_n(X_i)}{\partial x_j}\Big)^2 \Longrightarrow \mathbb{E}_X\Big[\Big(\frac{\partial h^{\star}(X)}{\partial x_j}\Big)^2\Big] = \int_{\mathcal{X}}\Big(\frac{\partial h^{\star}(x)}{\partial x_j}\Big)^2dP(x).
\end{equation*}
\end{proposition}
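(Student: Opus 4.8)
\emph{Reduction to an empirical-process remainder.} The natural route is to compare $\hat\lambda_j^n$ with the statistic $\lambda_j^n=\int_{\mathcal X}\big(\partial f_n/\partial x_j\big)^2\,dP$ of \eqref{emp stat}, i.e.\ the statistic \eqref{test statistic} for the admissible choice $\mu=P$, whose limiting law is already supplied by Theorem~\ref{theorem: test statistic distribution}. Writing $g_n:=\big(\partial f_n/\partial x_j\big)^2$ and letting $P_n$ be the empirical measure of $X_1,\dots,X_n$, one has the exact identity
\begin{equation*}
r_n^2\,\hat\lambda_j^n \;=\; r_n^2\,\lambda_j^n \;+\; r_n^2\,(P_n-P)[g_n].
\end{equation*}
Since $r_n^2\lambda_j^n\Rightarrow\int_{\mathcal X}(\partial h^\star/\partial x_j)^2\,dP$ by Theorem~\ref{theorem: test statistic distribution}, Slutsky's theorem reduces the claim to showing that the remainder $R_n:=r_n^2(P_n-P)[g_n]$ is $o_P(1)$: replacing $P$ by the empirical $P_n$ must not affect the limit even after the $r_n^2$ inflation.

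\emph{The relevant function class.} I would place $g_n$ inside a fixed Donsker class with polynomial metric entropy. For $f\in\Theta_n$ as in \eqref{sieve space}, $\partial f/\partial x_j=\sum_k b_k a_{j,k}\,\psi'(a_{0,k}+a_k^\top x)$, so (assuming $\psi\in\mathcal C^1$ with bounded derivative) $\|\partial f/\partial x_j\|_\infty\le c\tilde c\|\psi'\|_\infty=:M$ and $\partial f/\partial x_j$ lies in $c\tilde c\|\psi'\|_\infty$ times the symmetric convex hull of the VC-subgraph class $\{\psi'(a_0+a^\top\cdot)\}$. Exactly as in the entropy argument invoked before Theorem~\ref{theorem: NN distribution} (Theorem~2.6.9 of \citealt{van1996weak}), the $L^2(P)$-closure $\mathcal U$ of $\{\partial f/\partial x_j:f\in\cup_n\Theta_n\}$ has polynomial uniform entropy $\log N(\epsilon,\mathcal U,L^2(P))\lesssim\epsilon^{-\gamma}$ for some $\gamma\in(0,2)$ --- this is again where the single-hidden-layer sigmoid restriction is essential --- and, since $u\mapsto u^2$ is Lipschitz on $[-M,M]$, the class $\mathcal G:=\{u^2:u\in\mathcal U\}$, uniformly bounded by $M^2$ and containing every $g_n$, inherits $\log N(\epsilon,\mathcal G,L^2(P))\lesssim\epsilon^{-\gamma}$. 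In particular $\mathcal G$ is $P$-Donsker, and the empirical process $\mathbb G_n:=\sqrt n(P_n-P)$ obeys, for $\mathcal G_\delta:=\{g\in\mathcal G:\|g\|_{P,2}\le\delta\}$, the localized maximal inequality $\mathbb E\sup_{g\in\mathcal G_\delta}|\mathbb G_n[g]|\lesssim J(\delta)+J(\delta)^2/(\delta^2\sqrt n)$ with entropy integral $J(\delta)\asymp\delta^{1-\gamma/2}$.

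\emph{Localization and rate bookkeeping.} Under $H_0$ with $\mu=P$, the identity $\int(\partial f_0/\partial x_j)^2\,dP=0$ forces $\partial f_0/\partial x_j\equiv 0$ on $\mathcal X$ (Assumption~1), so that $g_n$ is \emph{small}: Theorem~\ref{theorem: test statistic distribution} with $\mu=P$ gives $r_n^2\,\mathbb E_X[(\partial f_n(X)/\partial x_j)^2]=O_P(1)$, i.e.\ $\|g_n\|_{P,1}=O_P(r_n^{-2})$, and combined with $\|g_n\|_\infty\le M^2$ this yields $\|g_n\|_{P,2}\le M\|g_n\|_{P,1}^{1/2}=O_P(r_n^{-1})$. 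Inserting $\delta_n\asymp r_n^{-1}$ into the localized maximal inequality (valid on an event of probability tending to one that contains $g_n$) gives $\mathbb G_n[g_n]=O_P\big(r_n^{-(1-\gamma/2)}+r_n^{\gamma}/\sqrt n\big)$, hence
\begin{equation*}
R_n=\frac{r_n^2}{\sqrt n}\,\mathbb G_n[g_n]=O_P\!\left(\frac{r_n^{\,1+\gamma/2}}{\sqrt n}+\frac{r_n^{\,2+\gamma}}{n}\right).
\end{equation*}
With $r_n=(n/\log n)^{(d+1)/(2(2d+1))}$ both terms carry a negative power of $n$: the second is $n^{(d+1)(2+\gamma)/(2(2d+1))-1}$ up to logarithmic factors, which tends to $0$ precisely because $2+\gamma<2(2d+1)/(d+1)$, equivalently $\gamma<2d/(d+1)$; the first term vanishes for the same range of $\gamma$. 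One therefore checks that the entropy exponent $\gamma$ of $\mathcal G$ satisfies $\gamma<2d/(d+1)$ (using $d\ge3$), whence $R_n=o_P(1)$, and combining with Theorem~\ref{theorem: test statistic distribution} and Slutsky completes the proof.

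\emph{Main obstacle.} A crude Donsker bound does not suffice here: $r_n^2/\sqrt n=n^{1/(2(2d+1))}(\log n)^{-(d+1)/(2d+1)}\to\infty$, so $R_n$ would diverge if one only knew $(P_n-P)[g_n]=O_P(n^{-1/2})$. The real content is the localization --- one must exploit that $g_n$ is a \emph{shrinking} element of $\mathcal G$, of $L^2(P)$-radius $O_P(r_n^{-1})$ under $H_0$ (itself a consequence of Theorem~\ref{theorem: test statistic distribution}), and feed this into a modulus-of-continuity bound whose polynomial rate $\delta^{1-\gamma/2}$ comes from the sigmoid-network entropy estimate. Pinning down the metric-entropy exponent $\gamma$ of the squared-derivative sieve class and verifying the exponent inequality $\gamma<2d/(d+1)$ --- where the restrictions to a single hidden layer, a sigmoid activation, and $d\ge 3$ all enter --- is the step demanding the most care.
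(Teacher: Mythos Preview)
Your decomposition coincides with the paper's: both write $r_n^2\hat\lambda_j^n = r_n^2\lambda_j^n + R_n$ with $R_n=\frac{r_n^2}{\sqrt n}\,\mathbb{G}_n[g_n]$, invoke Theorem~\ref{theorem: test statistic distribution} for the first piece, and then argue $R_n=o_P(1)$ (the paper also splits off $\frac{r_n^2}{\sqrt n}\mathbb{G}_n[\eta(f_0)]$, which is identically zero under $H_0$). The treatment of the remainder is where the two arguments diverge. The paper works through the reparametrization $h_n=r_n(f_n-f_0)$, tight by Theorem~\ref{theorem: NN distribution}: under $H_0$ one has the exact pointwise identity $g_n=r_n^{-2}\big(\partial h_n/\partial x_j\big)^2$, whence $R_n=n^{-1/2}\,\mathbb{G}_n\big[(\partial h_n/\partial x_j)^2\big]$, and a single Donsker-type bound on the \emph{fixed} class $\{(\partial h/\partial x_j)^2:h\in K\}$ --- requiring only entropy exponent strictly below $2$, which the Sobolev bound for $\Theta'$ supplies --- gives $R_n=O_P(n^{-1/2})$ directly. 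You instead keep $g_n$ in a fixed squared-derivative class $\mathcal G$, localize via the $L^2$ radius $\|g_n\|_{P,2}=O_P(r_n^{-1})$, and feed this into a modulus-of-continuity maximal inequality. This is sound but forces the sharper constraint $\gamma<2d/(d+1)$ on the entropy exponent, which you correctly isolate as the step demanding care. The paper's route is shorter and needs only $\gamma<2$ because the reparametrization extracts the full $r_n^{-2}$ scaling of $g_n$ as a pointwise identity rather than merely an $L^2$ bound; your route trades that simplicity for independence from the explicit tightness statement of Theorem~\ref{theorem: NN distribution}, relying only on the conclusion of Theorem~\ref{theorem: test statistic distribution}.
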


\section{Computing the Asymptotic Distribution}\label{S:4}
The limiting distribution of the test statistic in Theorem \ref{theorem: test statistic distribution} is a functional of the argmax of a Gaussian process over a function space. We discuss two approaches to computing this distribution in order to implement a test. One uses a series representation and the other a discretization method. Section \ref{S:5} numerically evaluates these approaches.

\subsection{Series Representation}
\label{SS:4.1}


We can exploit the fact that $\Theta$ is a subspace of the Hilbert space $L^2(P) = \{f: \mathcal{X} \to \mathbb{R}: \|f\|_{L^2(P)} < \infty\}$ which admits an orthonormal basis $\{\phi_i\}_{i=0}^{\infty}$. Assume that this basis is $C^1$ and is stable under differentiation. This means that $\forall i \in \mathbb{N}, 1 \leq j \leq d$, there exists an $\alpha_{i,j} \in \mathbb{R}$ and a mapping $k: \mathbb{N} \to \mathbb{N}$ such that
\begin{equation}
\label{basis function derivative}
    \frac{\partial \phi_i}{\partial x_j} = \alpha_{i,j} \phi_{k(i)}.
\end{equation}
This, in turn, implies that there exists an invertible operator $D$ which is diagonal with respect to the basis such that 
\begin{equation}
\label{stable basis}
  \|f\|^2_{k,2} = \|Df\|^2_{L^2(P)} = \sum_{i=0}^{\infty} d_i^2\langle f,\phi_i \rangle_{L^2(P)}^2,  
\end{equation}
where the $d_i$'s are certain functions of the $\alpha_{i,j}$'s that satisfy 
\begin{equation}
\label{condition on sum of weights}
    \sum_{i=0}^{\infty} \frac{1}{d_i^2} < \infty,\quad  \sum_{i=0}^{\infty} \frac{\alpha_{i,j}^2}{d_i^4} < \infty.
\end{equation}

\begin{theorem}[Series representation of the asymptotic distribution]
\label{theorem: new expression limiting distribution}
Assume the conditions of Theorem \ref{theorem: test statistic distribution} hold and that the weight measure $\mu=P$. If the orthonormal basis of $L^2(P)$ satisfies \eqref{basis function derivative}, then the asymptotic distribution of the test statistic satisfies
\begin{equation*}
     \mathbb{E}_X\bigg[\Big(\frac{\partial h^{\star}(X)}{\partial x_j}\Big)^2\bigg]\; = \; \frac{B^2}{\sum_{i=0}^{\infty} \frac{\chi_i^2}{d_i^2}}\sum_{i = 0}^{\infty} \frac{\alpha_{i,j}^2}{d_i^4} \chi_{i}^2, 
\end{equation*}
where $\{\chi_i^2\}_{i \in \mathbb{N}}$ are i.i.d. samples from the chi-square distribution.
\end{theorem}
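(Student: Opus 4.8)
The plan is to use Theorem~\ref{theorem: test statistic distribution} with $\mu = P$ to reduce the statement to computing $\mathbb{E}_X[(\partial h^\star/\partial x_j)^2] = \|\partial h^\star/\partial x_j\|_{L^2(P)}^2$ for the random element $h^\star = \arg\max_{f\in\Theta}\mathbb{G}_f$, and to carry this out by diagonalizing both the Gaussian process and the constraint set $\Theta$ in the orthonormal basis $\{\phi_i\}$. Since the covariance of $\mathbb{G}$ equals $4\sigma^2$ times the $L^2(P)$ inner product, $\mathbb{G}$ is a $2\sigma$-multiple of an isonormal Gaussian process on $L^2(P)$; writing $g_i := (2\sigma)^{-1}\mathbb{G}_{\phi_i}$ makes the $g_i$ i.i.d.\ $N(0,1)$ and, by the a.s.\ linearity of $\mathbb{G}$ together with $L^2$-continuity in the index, $\mathbb{G}_f = 2\sigma\sum_i c_i g_i$ for $f = \sum_i c_i\phi_i$. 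This series converges absolutely a.s.: by \eqref{stable basis} the constraint $\|f\|_{\lfloor d/2\rfloor+2}\le B$ reads $\sum_i d_i^2 c_i^2\le B^2$, which together with $\sum_i d_i^{-2}<\infty$ from \eqref{condition on sum of weights} and Cauchy--Schwarz gives $\sum_i|c_i|\le B(\sum_i d_i^{-2})^{1/2}<\infty$. So, modulo the regularity point discussed below, maximizing $\mathbb{G}$ over $\Theta$ becomes the coefficient-space problem $\max\{\sum_i c_i g_i : \sum_i d_i^2 c_i^2\le B^2\}$.

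I would then solve this with a second application of Cauchy--Schwarz: $\sum_i c_i g_i = \sum_i(d_i c_i)(g_i/d_i)\le B\,(\sum_i g_i^2/d_i^2)^{1/2}$, with equality exactly when $c_i = t\,g_i/d_i^2$ and the constraint is active, which forces $t = B(\sum_k g_k^2/d_k^2)^{-1/2}$; note $\sum_k g_k^2/d_k^2\in(0,\infty)$ a.s.\ because its mean is $\sum_k d_k^{-2}<\infty$. Hence the (unique) maximizer is $h^\star = \sum_i c_i^\star\phi_i$ with $c_i^\star = B(\sum_k g_k^2/d_k^2)^{-1/2}\,g_i/d_i^2$. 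Using stability under differentiation \eqref{basis function derivative}, $\partial h^\star/\partial x_j = \sum_i c_i^\star\alpha_{i,j}\phi_{k(i)}$, so by orthonormality of $\{\phi_i\}$
\begin{equation*}
\mathbb{E}_X\Big[\Big(\tfrac{\partial h^\star(X)}{\partial x_j}\Big)^2\Big] \;=\; \sum_i(c_i^\star)^2\alpha_{i,j}^2 \;=\; \frac{B^2}{\sum_k g_k^2/d_k^2}\sum_i\frac{\alpha_{i,j}^2\, g_i^2}{d_i^4},
\end{equation*}
the right side being finite a.s.\ by the second part of \eqref{condition on sum of weights}. Writing $\chi_i^2 := g_i^2$, which are i.i.d.\ $\chi^2_1$, gives exactly the stated expression.

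The main obstacle is to justify that this coefficient-space computation genuinely yields $h^\star = \arg\max_{f\in\Theta}\mathbb{G}_f$, i.e.\ that the constructed $h^\star$ belongs to $\Theta$. Its Sobolev norm is exactly $\|h^\star\|_{\lfloor d/2\rfloor+2}^2 = \sum_i d_i^2(c_i^\star)^2 = B^2$, so the norm constraint holds (with equality); the remaining requirement is $C^1$ regularity, which I would get from the Sobolev embedding $W^{\lfloor d/2\rfloor+2,2}(\mathcal{X})\hookrightarrow C^1(\mathcal{X})$ (valid since $\lfloor d/2\rfloor+2>d/2+1$), after using \eqref{stable basis} to identify the Sobolev ball with the ellipsoid $\{(c_i):\sum_i d_i^2 c_i^2\le B^2\}$. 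Once $h^\star\in\Theta$ is secured, its optimality over $\Theta$ is immediate from the Cauchy--Schwarz bound, which was established for every $f\in\Theta$. A secondary bookkeeping point, should the index map $k:\mathbb{N}\to\mathbb{N}$ in \eqref{basis function derivative} fail to be injective, is that one must first group the terms $\phi_{k(i)}$ sharing a common index before taking $L^2(P)$ norms; this is harmless for the standard trigonometric and Hermite bases, for which the $\phi_{k(i)}$ with nonzero $\alpha_{i,j}$ are pairwise distinct and no cross terms arise.
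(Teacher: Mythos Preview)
Your proposal is correct and follows essentially the same route as the paper: diagonalize the Gaussian process in the orthonormal basis $\{\phi_i\}$, reduce the argmax over $\Theta$ to maximizing a linear form over the weighted ellipsoid $\sum_i d_i^2 c_i^2\le B^2$, solve by Cauchy--Schwarz, and then compute $\|\partial h^\star/\partial x_j\|_{L^2(P)}^2$ using \eqref{basis function derivative}. The paper dresses the optimization step slightly differently (Riesz representation followed by the substitution $g=Df$), but this is the same Cauchy--Schwarz argument you give directly; your version is in fact a bit more careful, since you explicitly address membership of $h^\star$ in $\Theta$ via Sobolev embedding and flag the injectivity of $k$ needed to pass from $\partial h^\star/\partial x_j=\sum_i c_i^\star\alpha_{i,j}\phi_{k(i)}$ to $\sum_i(c_i^\star)^2\alpha_{i,j}^2$, both of which the paper leaves implicit.
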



To provide a concrete example, we consider the case where the elements of the feature vector $X$ are mutually independent and distributed uniformly on $[-1,1]$. The density $p$ of $P$ takes the form
\begin{equation}
    \label{density input variables}
    p(x) = \frac{1}{2^d}.
\end{equation}
We now use the Fourier basis, which is an orthonormal basis of $C^{\infty}$ functions for both $L_2([-1,1]^d)$ and $\mathbb{H}^{\lfloor \frac{d}{2} \rfloor+2}([-1,1]^d)$ 
with $$\|f\|^2_{\mathbb{H}^{\lfloor \frac{d}{2} \rfloor+2}} = 2^d\sum_{n \in \mathbb{N}^d} \Bigg[\sum_{|\alpha| \leq \lfloor \frac{d}{2} \rfloor+2} \prod_{k=1}^d\gamma_{n_k}^{\alpha_k} \Bigg]|f_n|^2,$$ where $\gamma_n = n^2 \pi^2$, $\phi_n^{[0]} = \cos(n\pi x)$ and $\phi_n^{[1]} = \sin(n\pi x)$.
We define the weights $d_n^{2}$ as $d_n^{2}=\sum_{|\alpha| \leq \lfloor \frac{d}{2} \rfloor+2} \prod_{k=1}^d\gamma_{n_k}^{\alpha_k}$. This implies that
\begin{equation*}
    h^{\star}_{\omega} = \sum_{i \in \{0,1\}^d}\sum_{n \in \mathbb{N}^d} \frac{B}{\|D^{-1}\phi_{\xi}\|}\frac{\xi_n^{[i]}}{d_n^{[i]2}} \phi_n^{[i]},
\end{equation*}
where $\|D^{-1}\phi_{\xi}\|^2 = 4\sigma^2\sum_{i \in \{0,1\}^d}\sum_{n \in \mathbb{N}^d} \frac{\chi_{n}^{[i]2}}{d_n^{2}}$ and $\{\chi_{n}^{[i]2}\}_{n \in \mathbb{N}^d, i \in \{0,1\}^d}$ are i.i.d. samples from a chi-square distribution.  It follows that

\begin{equation}
\label{estimable distribution}
      \mathbb{E}_X\Big[\Big(\frac{\partial h^{\star}(X)}{\partial x_j}\Big)^2\Big] = \Big\|\frac{\partial h^{\star}(X)}{\partial x_j}\Big\|^2_{L^2(P)} = \frac{B^2}{\sum_{i \in \{0,1\}^d}\sum_{n \in \mathbb{N}^d} \frac{\chi_{n}^{[i]2}}{d_n^{2}}}\sum_{i \in \{0,1\}^d}\sum_{n \in \mathbb{N}^d} \frac{\gamma_{n_j}}{d_n^{4}} \chi_{n}^{[i]2}.
\end{equation}


The implementation of the test requires computing a quantile of the limiting distribution. We can approximate the limiting distribution by truncating the infinite sum in Theorem \ref{theorem: new expression limiting distribution} at some order $N$, which can be chosen to achieve some given degree of accuracy. By sampling from a chi-squared distribution, we generate $m$ samples $\{Z_i^N\}_{i=1}^m$ from an approximate asymptotic distribution, $F^N$. If we let $m$ grow along with $N$, i.e., if we consider a sequence $m_N$ such that $m_N \to \infty$ as $N \to \infty$, we obtain the consistency of the empirical approximate distribution, denoted $\mathbb{F}_{m_N}^{N}$,
\begin{equation}
\label{consistency empirical distribution}
    \mathbb{F}_{m_N}^{N}(x) \xrightarrow[N \to \infty]{P} F(x), \quad \forall x \in \mathbb{R}.
\end{equation}
This follows from a triangular array weak law of large number.
Define the empirical approximate quantile function $\mathbb{F}_{m_N,N}^{-1}$ as
\begin{equation}
\label{empirical quantile function}
    \mathbb{F}_{m_N,N}^{-1}(\alpha) = Z^{N}_{m_N(i)},\quad \mbox{ } \text{for} \mbox{ } \alpha \in \Bigg(\frac{i-1}{m_N},\frac{i}{m_N}\Bigg],
\end{equation}
where $Z^{N}_{m_N(1)},...,Z^{N}_{m_N(n)}$ are the order statistics of $\{Z_i^N\}_{i=1}^{m_N}$.
Then, from the previous result and Lemma 21.2 of \cite{van2000asymptotic}, we get the consistency of the empirical approximate quantile function:
\begin{equation}
\label{consistency empirical quantile}
    \mathbb{F}_{m_N,N}^{-1}(\alpha) \xrightarrow[N \to \infty]{P} F^{-1}(\alpha),\quad \forall 0 \leq \alpha \leq 1.
\end{equation}
Thanks to this consistent estimator, by letting the truncation order $N$ go to infinity as $n \to \infty$, we can reject the null if the empirical test statistic $\hat{\lambda}_n > \mathbb{F}_{m_N,N}^{-1}(1-\alpha)$ such that the test will be asymptotically of level $\alpha\in[0,1]$
\begin{equation*}
    \mathbb{P}_{H_0}\big(\hat{\lambda}_n > \mathbb{F}_{m_N,N}^{-1}(1-\alpha)\big) \leq \alpha.
\end{equation*}

The truncation order $N$ can be chosen so as to achieve a given approximation error $\epsilon$. We illustrate this for the choice (\ref{density input variables}) discussed above. From Fourier approximation theory, we know that given the full index set of level $N$, $I_N = \{n \in \mathbb{N}^d: \max_{1\leq j \leq d} n_j \leq N-1\}$, the approximation rate of the Fourier series of order $N$ for any $f \in \Theta$ is
\begin{equation}
\label{approximation rate Fourier series}
    \Big\|f - \sum_{i \in \{0,1\}^d}\sum_{n \in I_N} \langle f, \phi_n^{[i]} \rangle\phi_n^{[i]}\Big\|_{L^2} \leq O\Bigg(\frac{1}{N^{\lfloor \frac{d}{2} \rfloor+2}}\Bigg).
\end{equation}
This entails that $O\Big(\frac{1}{\epsilon}\Big)^{\frac{d}{\lfloor \frac{d}{2} \rfloor+2}}$ Fourier basis functions will be necessary to reach a given $L^2$ approximation error $\epsilon$.

\subsection{Discretization Approach}
\label{SS:4.4}

The limiting distribution of the test statistic is a functional of the argmax $h^{\star}$ of a Gaussian process indexed by the function space $\Theta$. The argmax is defined as a random function such that $\forall \omega \in \Omega$, $h^{\star}_{\omega}$ satisfies $\mathbb{G}_{h^{\star}_{\omega}}(\omega) \geq \mathbb{G}_f(\omega)$, $\forall f \in \Theta$. Given an $\epsilon$-cover $\{f_1,...,f_C\}$ of the function space $\Theta$ of size $C$, a discrete approximation of the paths of the process can be obtained by sampling from a multivariate Gaussian variable of dimension $C$ with mean 0 and covariance matrix whose elements are $\mathbb{E}_X(f_i(X)f_j(X))_{i,j=1}^C$. For every $\omega$ corresponding to a sample from this multivariate Gaussian, the argmax among the $\epsilon$-cover can be identified and by applying the functional (\ref{functional def}) to it, this generates an approximate sample from the limiting distribution. The accuracy is controlled by $\epsilon$.

Computing an explicit $\epsilon$-cover of the function space $\Theta$ can be challenging. A simple approach to approximate this cover uses random functions sampled from it. More specifically, we generate random neural networks (\ref{NN}) by sampling the network parameter. By generating a large enough number of networks, we increase the likelihood of covering $\Theta$.

\section{Simulation Experiments}
\label{S:5}
This section provides numerical results that illustrate our theoretical results and the properties of the significance test.

\subsection{Data-Generating Process}
We consider a vector $X$ of eight feature variables satisfying $$X=(X_1,\ldots,X_8) \sim \mathcal{U}(-1, 1)^8.$$
We consider the following data generating process,
\begin{align}\label{model}
Y = 8 + X_1^2 + X_2X_3 + \cos(X_4) + \exp{(X_5X_6)} + 0.1X_7 + \epsilon,
\end{align}
where $\epsilon \sim N(0, \sigma^2)$ and $\sigma = 0.01$. The variable $X_8$ has no influence on $Y$ and is hence irrelevant.
We generate a training set of $n=100,000$ independent samples and validation and testing sets of $10,000$ independent samples each.

\subsection{Fitting a Neural Network}
We fit a fully-connected feed-forward neural network with one hidden layer and sigmoid activation function to the training set using the TensorFlow package. We employ the Adam stochastic optimization method with step size 0.001 and exponential decay rates for the moment estimates of 0.9 and 0.999. We use a batch size of 32, a maximum number of 150 epochs and an early stopping criterion that stops the training when the validation error has not decreased by more than $10^{-5}$ for at least 5 epochs. The number of hidden nodes is chosen so as to minimize the validation loss. A network with 25 hidden units performs best. 

Table \ref{table: MSEs} compares the neural network's mean square error (MSE) for the test set with the test MSE for a linear model fitted to the same training data set. The neural network's test MSE is of the order of the regression noise. The linear model's test MSE is three orders of magnitude larger.

\begin{table}[t!]
\centering
 \begin{tabular}{| c | c |}
    \hline
    \textbf{Model} & \textbf{Test MSE} \\ \hline
    Neural Network & $3.1 \cdot 10^{-4}$ \\\hline
    Linear & 0.35  \\ 
    \hline
\end{tabular}
\caption{Test MSE for the neural network and an alternative linear model.}
\label{table: MSEs}
\end{table}

\subsection{Test Statistic}

For each variable $1\leq j \leq 8$, we compute the empirical test statistic $\hat{\lambda}^n_j$ in (\ref{empirical test statistic}) using the \textsf{gradients} function of TensorFlow over the training set. 
Table \ref{table: test statistic} reports the results.
\begin{table}[t]
\centering
 \begin{tabular}{| c | c | c |}
    \hline
    \textbf{Variable} & \textbf{Test Statistic}  & \textbf{Leave-One-Out Metric}\\ \hline
    $X_1$ & 1.31 & 8.94$\cdot 10^{-2}$ \\ \hline
    $X_2$ & 0.332 & 1.12$\cdot 10^{-1}$ \\ \hline
    $X_3$ & 0.331 & 1.12$\cdot 10^{-1}$\\ \hline
    $X_4$ & 0.267 & 2.09$\cdot 10^{-2}$\\ \hline
    $X_5$ & 0.480 & 1.30$\cdot 10^{-1}$\\ \hline
    $X_6$ & 0.479 & 1.30$\cdot 10^{-1}$\\ \hline
    $X_7$ & 1.01$\cdot 10^{-2}$ & 3.50$\cdot 10^{-3}$ \\ \hline
    $X_8$ & 4.20$\cdot 10^{-6}$ & 3.46$\cdot 10^{-6}$ \\
    \hline
\end{tabular}
\caption{Values of empirical test statistic $\hat{\lambda}^n_j$ and leave-one-out metric.}
\label{table: test statistic}
\end{table}
The statistic properly discriminates the irrelevant variable $X_8$ from the relevant variables. We also note that the statistic is similar for the variables that have a symmetric influence, namely $(X_2, X_3)$ and $(X_5, X_6)$. Moreover, for the linear variable $X_7$, the statistic $\hat{\lambda}_7^n=0.01=0.1^2$, showing that the statistic correctly identifies the coefficient of $X_7$, which is $0.1$. We note that the test statistic can be used to rank order variables according to their influence. The greater the statistic the greater the influence. Thus the quadratic variable $X_1$ is identified as the most important variable.  

For comparison, Table \ref{table: test statistic} also reports the leave-one-out metric, which is widely used in practice to assess variable influence. The leave-one-out metric for variable $X_j$ is the difference between the loss of the model based on all variables except $X_j$ and the loss of the model based on all variables. It is generally positive; the larger its value the larger one deems the influence of a variable. The results in Table \ref{table: test statistic} suggest that the leave-one-out metric shares the symmetry properties with our test statistic. There are important differences, however. For example, relative to our test statistic, the leave-one-out metric discounts the influence of the quadratic variable $X_1$. It identifies the exponential variables $X_5$ and $X_6$ as the most important variables. Moreover, the calculation of the leave-one-out metric requires re-fitting the model $d$ times, where $d$ is the dimension of $X$. This can be computationally expensive for models with many features. The calculation of our test statistic does not require re-fitting the model. Perhaps most importantly, our test statistic is supported by a rigorous test. To our knowledge, no test has been provided for the leave-one-out metric. 

\subsection{Estimation of Quantile}
We choose a confidence level $\alpha = 0.05$ and compute the empirical quantile $\mathbb{F}_{m_N,N}^{-1}(1-\alpha)$ using the series representation approach described in Section \ref{SS:4.1}. We use the Fourier basis, choose a truncation order $N = 4$, and sample $m_N=10,000$ observations from the corresponding approximate asymptotic distribution.

We also need to choose the constant $B$ that uniformly bounds the Sobolev norm of the functions in $\Theta$ as defined in \eqref{def function space}. The asymptotic distribution in \eqref{estimable distribution} is scaled by $B$. We need to choose $B$ large enough so that the unknown regression function $f_0$ belongs to $\Theta$, which translates to $\|f_0\|_{\lfloor\frac{d}{2}\rfloor+2} \leq B$. But because a $B$ that is too large would result in a test with smaller power, the optimal value of $B$ is the norm of $f_0$. One could estimate this norm using Monte Carlo simulation or numerical integration, but because this may be computationally expensive, we follow the following alternative approach.

We include several uniformly distributed auxiliary noise variables in $X$ and fit the neural network using both the original variables as well as the noise variables. The test statistics for the noise variables have a limiting distribution that includes the $B$ factor. We also estimate the quantile of the asymptotic distribution as described in Section \ref{SS:4.1} by sampling from the limiting distribution with value $B = 1$, hence these are unscaled samples. An estimator of $B$ can be obtained by computing the ratio of the mean of the test statistics of the noise variables to the mean of the unscaled samples. Because the number of noise variables is relatively small, this estimator can be noisy. Therefore, we prefer to use the maximum value of the test statistics of the noise variables (rather than the average) to increase the likelihood that $\|f_0\|_{\lfloor\frac{d}{2}\rfloor+2} \leq B$.

\subsection{Results}
We estimate the power and size of the test by performing it on 250 alternative data sets $(Y_i,X_i)_{i=1}^n$ generated from the model (\ref{model}). The second column of Table \ref{table: power and size} reports the results. The power of the test is ideal (i.e., there is no Type 2 error). The size (Type 1 error) is 0.35, larger than the $\alpha=0.05$ level of the test. The approximate asymptotic distribution on which the test is based might underestimate the variance of the finite sample statistic. 

\begin{table}[t!]
\centering
 \begin{tabular}{| c | c | c | c |}
    \hline
    \textbf{Variable} & \textbf{NN Test (S)} & \textbf{NN Test (D)} & \textbf{$t$-Test} \\ \hline
    $X_1$  & 1 & 1 & 0.07\\ \hline
    $X_2$  & 1 & 1 & 0.07\\ \hline
    $X_3$  & 1 & 1 & 0.06\\ \hline
    $X_4$  & 1 & 1 & 0.05\\ \hline
    $X_5$  & 1 & 1 & 0.10\\ \hline
    $X_6$  & 1 & 1 & 0.07\\ \hline
    $X_7$  & 1 & 1 & 1\\ \hline
    $X_8$  & 0.35 & 0 & 0.04\\
    \hline
\end{tabular}
\caption{Power and size of significance tests at 5\% level. The NN Test (S) column reports the values for our test when the series method described in Section \ref{SS:4.1} is used to compute the asymptotic distribution of the test statistic. The NN Test (D) column reports the values for our test when the discretization method described in Section \ref{SS:4.4} is used to compute the asymptotic distribution. The $t$-Test column reports the values for a standard $t$-test in an alternative linear regression model. }
\label{table: power and size}
\end{table}

The last column of Table \ref{table: power and size} also reports power and size of a $t$-test for an alternative linear regression model. Only the linear variable $X_7$ is identified as significant. These results illustrate how model misspecification can hurt inference.


\subsection{Correlated Feature Variables}
In practice, the elements of the feature vector $X$ may be correlated. We study the robustness of the test in this regard. 
We generate correlated variables $X_i$ whose marginal distribution is still $\mathcal{U}(-1,1)$ but whose correlation structure is controlled by a Gaussian copula. We first generate samples from a multivariate normal distribution $(Z_1,\ldots,Z_8) \sim \mathcal{N}(0, \Sigma)$ with covariance matrix $\Sigma$ given by

\[\Sigma =
\begin{bmatrix}
    1 & 0.1 & 0 & 0 & 0 & 0 & 0 & 0 \\
    0.1 & 1 & 0 & 0 & 0 & 0 & 0 & 0 \\
    0 & 0 & 1 & 0 & 0 & 0 & 0 & 0 \\
    0 & 0 & 0 & 1 & 0 & 0 & 0.3 & 0 \\
    0 & 0 & 0 & 0 & 1 & 0.5 & 0 & 0 \\
    0 & 0 & 0 & 0 & 0.5 & 1 & 0 & 0 \\
    0 & 0 & 0 & 0.3 & 0 & 0 & 1 & 0 \\
    0 & 0 & 0 & 0 & 0 & 0 & 0 & 1 \\
\end{bmatrix},
\]
implying that $X_1$ and $X_2$, $X_5$ and $X_6$ as well as $X_4$ and $X_7$ are correlated. We then apply the standard normal distribution function on each coordinate to obtain the corresponding correlated uniform variables. We generate new data sets and re-fit the model.


\begin{table}[t!]
\centering
 \begin{tabular}{| c | c |}
    \hline
    \textbf{Variable} & \textbf{NN Test (D)} \\ \hline
    $X_1$  & 1 \\ \hline
    $X_2$  & 1 \\ \hline
    $X_3$  & 1 \\ \hline
    $X_4$  & 1 \\ \hline
    $X_5$  & 1 \\ \hline
    $X_6$  & 1 \\ \hline
    $X_7$  & 1 \\ \hline
    $X_8$  & 0.004 \\
    \hline
\end{tabular}

\caption{Power and size of significance tests at 5\% level when the feature variables are correlated. The discretization method described in Section \ref{SS:4.4} was used to compute the asymptotic distribution of the test statistic.}
\label{table: power and size correlation}\end{table}

Table \ref{table: power and size correlation} reports the size and power of the test in this situation (as above, 250 alternative data sets were used). The size and power of the test do not deteriorate, suggesting that the test performs well in the presence of correlated feature variables.


\subsection{Discretization Approach}\label{discrete}
We study the performance of the significance test when using the discretization approach in Section \ref{SS:4.4} to estimate the asymptotic distribution. This approach approximates the argmax of the Gaussian process by using random functions sampled from $\Theta$. More specifically, we randomly sample 500 functions using fully-connected, single-layer feed-forward neural networks with sigmoid activation function and 25 hidden units. The parameters are sampled from a Glorot normal distribution, which is a truncated normal distribution centered at 0 with standard deviation $\sqrt{2/(in + out)}$, where $in$ is the number of input nodes and $out$ is the number of output nodes of the layer for which the parameters are sampled (see \cite{glorot2010understanding}).

To generate a sample from the asymptotic distribution, we first generate a sample from a multivariate normal distribution of dimension 500 with mean vector 0 and covariance matrix that we approximate by the diagonal matrix with diagonal elements equal to $\sigma_j^2 = \frac{1}{n}\sum_{i=1}^n f_j(X_i)^2$. The value $\sigma_j^2$ is the empirical expectation of the square of the function that we estimate. We extract the index of the maximum value from this multivariate normal, which correspond to the approximate argmax of the Gaussian process. Given this approximate argmax function, we finally generate an approximate sample from the asymptotic distribution of the test statistic. We repeat this process 10,000 times in order to estimate the quantile at the 5\% level.

The third column of Table \ref{table: power and size} reports the power and size of the test based on this approach (as before, we use 250 data sets to estimate the power and size). Our significance test has perfect power and size.

\section{Empirical Application: House Price Valuation}
\label{S:6}
We use the significance test to study the variables influencing house prices in the United States.
We analyze a data set of 76,247 housing transactions in California's Merced County between 1970 and 2017. The data are obtained from the county's registrar of deed office through the data vendor CoreLogic. The dependent variable $Y$ of the regression is the logarithm of the sale price. The 68 elements of the feature vector $X$ are listed in Appendix \ref{table: variables descriptions} along with descriptions. They include house characteristics, mortgage characteristics, tax characteristics, local and national economic conditions, and other types of variables. Every variable is centered and scaled to unit variance. $70\%$ of the data is used for training, $20\%$ for validation, and the remainder is used for testing. 

We fit a fully-connected, single-layer feed-forward neural network with sigmoid activation function and $l$-1 regularization term using TensorFlow. We employ the Adam stochastic optimization method with step size of 0.001 and exponential decay rates for the moment estimates of 0.9 and 0.999. We use a batch size of 32, a maximum number of 100 epochs and an early stopping criterion that stops the training when the validation error has not decreased by more than $10^{-3}$ for at least 10 epochs. The number of hidden nodes and the regularization weight are chosen so as to minimize the validation loss. The optimal network architecture has 150 hidden units and the optimal regularization weight is $10^{-5}$. The mean squared error (MSE) for the test set is $0.45$.



\begin{table}[t!]
\centering
 \begin{tabular}{| c | c |}
    \hline
    \textbf{Variable Name} & \textbf{Test Statistic} \\ \hline
Last\_Sale\_Amount	& 1.640 \\ \hline
Tax\_Land\_Square\_Footage & 1.615 \\ \hline
Sale\_Month\_No &	1.340 \\ \hline
Tax\_Amount	& 0.3834 \\ \hline
Last\_Mortgage\_Amount & 0.1040 \\ \hline
Tax\_Assd\_Total\_Value &	0.0812 \\ \hline
Tax\_Improvement\_Value\_Calc & 0.0721 \\ \hline
Tax\_Land\_Value\_Calc &	0.0690 \\ \hline
Year\_Built	& 0.0681 \\ \hline
SqFt & 0.0566 \\ \hline
\end{tabular}
\caption{Test statistic for the top 10 significant ($5\%$) feature variables (out of 68).}
\label{table: test statistic house data top 10}
\end{table}

We use the discretization method to compute the asymptotic distribution of the test statistic (see Section \ref{SS:4.4} above). We sample 500 random functions using neural networks with the same architecture as the fitted one but with weights sampled from a Glorot normal distribution (see Section \ref{discrete} above). 10,000 samples are generated to estimate the quantile of the asymptotic distribution.

Table \ref{table: test statistic house data top 10} reports the values of the empirical test statistic (\ref{emp stat}) for the top 10 variables that are significant at the $5\%$ level; the variables are ordered according to the magnitude of the test statistic. The last sale price is the most influential variable, closely followed by the square footage of the land. The month of sale is also very influential. This reflects the seasonality of sales: the spring months usually bring the most listings and the best sales prices, as many people try to move in the summer during summer holidays. The square footage of the home turns out to be less influential than perhaps expected.

Table \ref{table: test statistic house data bottom 10} reports the variables that are found to be insignificant at the $5\%$ level. 

\begin{table}[t]
\centering
 \begin{tabular}{| c | c |}
    \hline
    \textbf{Variable Name} & \textbf{Test Statistic} \\ \hline
Median\_NonInst\_PriceChange\_pct & 4.430e-05 \\ \hline
Median\_SAF\_PriceChange	& 3.096e-05 \\ \hline
Median\_SAF\_PriceChange\_Model & 2.795e-05 \\ \hline
Median\_NonInst\_SAF\_PriceChange\_Model\_pct & 1.405e-05 \\ \hline
\end{tabular}
\caption{Variables that are found to be insignificant at the $5\%$ level.}
\label{table: test statistic house data bottom 10}
\end{table}



\section{Conclusion}\label{S:7}
We develop and analyze a pivotal test for assessing the statistical significance of the feature variables of a single-layer, fully connected feedforward neural network that models an unknown regression function. The gradient-based test statistic is a weighted average of the squared partial derivative of the neural network estimator with respect to a variable. We show that under technical conditions, the large-sample asymptotic distribution of the test statistic is given by a mixture of chi-square distributions. A simulation study illustrates the performance of the test and an empirical application to house price valuation highlights its behavior given real data.  


\acks{We are grateful for comments from the participants of the 2018 NeurIPS conference, the Conference on Interpretable Machine-Learning Models and Financial Applications at MIT, the Machine Learning in Finance Conference at Oxford University, the Machine Learning in Finance Workshop at Columbia University, the Machine Learning Conference at EPFL Lausanne, the Risk Seminar at ETH Zurich, and the OR Seminar at UC Louvain. We are also grateful for discussions with Jose Blanchet, Tze Lai, and Yinyu Ye. We finally would like to thank Yerso Checya and Stefan Weber for pointing out gaps in the proofs of Theorem \ref{theorem: NN distribution} and Proposition \ref{corollary: empirical test statistic distribution} and helping to fix them.}

\appendix
\section{Proofs}

The following result is used repeatedly in the proofs.
\begin{theorem}[Theorem 2.11.23 of \cite{van1996weak}]
For each $n$, let $\mathcal{F}_n = \{f_{n,t}: t \in T\}$ be a class of measurable functions indexed by a totally bounded semimetric space $(T, \rho)$. Given envelope functions $F_n$, assume that
$$\mathbb{E}(F_n^2) = O(1),$$
$$\mathbb{E}(F_n^21_{\{F_n > \eta\sqrt{n}\}}) \to 0, \textsl{for every } \eta >0,$$
$$\sup_{\rho(s,t) < \delta_n} \mathbb{E}[(f_{s,t} - f_{n,t})^2] \to 0, \textsl{for every } \delta_n \to 0.$$
If 
$$\int_0^{\delta_n} \sqrt{\log N_{[ ]}(\epsilon\|F_n\|_{P,2},\mathcal{F}_n, L^2(P))}d\epsilon \to 0, \textsl{for every } \delta_n \to 0,$$
then the sequence $\{\mathbb{G}_n f_{n,t}: t \in T\}$ is asymptotically tight in $l^{\infty}(T)$ and converges in distribution to a tight Gaussian process provided the sequence of covariance functions $\mathbb{E}(f_{n,s}f_{n,t}) - \mathbb{E}(f_{n,s})\mathbb{E}(f_{n,t})$ converges pointwise on $T \times T$.
\end{theorem}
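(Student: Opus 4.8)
The plan is to apply the general functional central limit theorem for triangular arrays (Theorem 2.11.7 or its successors in \cite{van1996weak}), and reduce the hypotheses of this statement to the hypotheses of that theorem. The decomposition $\mathbb{G}_n f_{n,t} = \mathbb{G}_n(f_{n,t} - \pi_n f_{n,t}) + \mathbb{G}_n \pi_n f_{n,t}$, where $\pi_n$ projects onto a finite-dimensional approximating class, is the organizing device: the first piece will be shown to be asymptotically negligible uniformly in $t$ using the bracketing-entropy integral condition together with a maximal inequality, and the second is a finite-dimensional sum of triangular-array increments to which a Lindeberg-type argument applies. More precisely, the strategy is: (1) verify asymptotic equicontinuity of $\{\mathbb{G}_n f_{n,t}\}$, i.e. that for every $\varepsilon > 0$ and $\eta > 0$ there is $\delta > 0$ with $\limsup_n \mathbb{P}(\sup_{\rho(s,t) < \delta} |\mathbb{G}_n f_{n,s} - \mathbb{G}_n f_{n,t}| > \varepsilon) < \eta$; (2) combine this with pointwise convergence of the finite-dimensional covariances to get convergence of all finite-dimensional marginals to Gaussian limits; (3) invoke the characterization of weak convergence in $\ell^\infty(T)$ (asymptotic tightness + marginal convergence, e.g. Theorem 1.5.4 of \cite{van1996weak}) to conclude.

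For step (1), the key tool is a maximal inequality bounding $\mathbb{E}\sup_{\rho(s,t)<\delta_n}|\mathbb{G}_n(f_{n,s}-f_{n,t})|$ in terms of the bracketing integral $\int_0^{a_n}\sqrt{\log N_{[\,]}(\varepsilon\|F_n\|_{P,2},\mathcal{F}_n,L^2(P))}\,d\varepsilon$, where $a_n \to 0$ is chosen so that it dominates the fluctuation size $\sup_{\rho(s,t)<\delta_n}\|f_{n,s}-f_{n,t}\|_{P,2}$; the third listed hypothesis (the $L^2$-continuity condition $\sup_{\rho(s,t)<\delta_n}\mathbb{E}[(f_{n,s}-f_{n,t})^2]\to 0$) guarantees such an $a_n$ exists. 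The truncation hypotheses $\mathbb{E}(F_n^2)=O(1)$ and the uniform integrability $\mathbb{E}(F_n^2 1_{\{F_n>\eta\sqrt n\}})\to 0$ handle the ``big-jump'' contributions: one splits each $f_{n,t}$ at level $\eta\sqrt n$, shows the large part contributes $o_P(1)$ to the empirical process uniformly via the second hypothesis, and applies the symmetrization/chaining bound to the truncated part, for which Bernstein-type bracketing inequalities (Lemma 2.14.16 or 2.11.something of \cite{van1996weak}) are valid. This is the step I expect to be the main obstacle: one must be careful that the bracketing numbers are computed for the $L^2(P)$-seminorm while the chaining must control the process pathwise, so the relevant maximal inequality is the bracketing version (not the uniform-entropy one), and matching the entropy integral's upper limit to the actual oscillation of the array requires the stated hypotheses to interlock precisely. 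Once the equicontinuity modulus is controlled, asymptotic tightness in $\ell^\infty(T)$ follows from total boundedness of $(T,\rho)$ plus the equicontinuity, by the standard characterization.

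For step (2), fix $t_1,\dots,t_k \in T$; the vector $(\mathbb{G}_n f_{n,t_1},\dots,\mathbb{G}_n f_{n,t_k})$ is a normalized sum of i.i.d.\ (within row) mean-zero random vectors $n^{-1/2}\sum_i (f_{n,t_\ell}(Z_{n,i}) - \mathbb{E}f_{n,t_\ell})_\ell$. Its covariance is $\mathbb{E}(f_{n,s}f_{n,t}) - \mathbb{E}(f_{n,s})\mathbb{E}(f_{n,t})$ evaluated at the chosen points, which converges by the last hypothesis of the statement. The Lindeberg condition for this triangular array is implied by $\mathbb{E}(F_n^2) = O(1)$ together with $\mathbb{E}(F_n^2 1_{\{F_n > \eta\sqrt n\}}) \to 0$ (since $|f_{n,t}| \le F_n$ for each $t$, the Lindeberg sum is bounded by $\mathbb{E}(F_n^2 1_{\{F_n > \eta\sqrt n\}})$ up to constants). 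Hence the multivariate Lindeberg–Feller CLT gives convergence of the marginals to a centered Gaussian with the limiting covariance. Combining (1) and (2) with Theorem 1.5.4 of \cite{van1996weak} yields that $\{\mathbb{G}_n f_{n,t}: t \in T\}$ converges in distribution in $\ell^\infty(T)$ to a tight mean-zero Gaussian process with the stated covariance, which is the claim. The only genuinely delicate bookkeeping, beyond the bracketing maximal inequality mentioned above, is verifying that the limit process has a version with uniformly $\rho$-continuous sample paths — this is automatic from asymptotic tightness plus total boundedness of $(T,\rho)$, so no separate argument is needed.
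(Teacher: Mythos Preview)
The paper does not prove this statement. Theorem~2.11.23 is quoted from \cite{van1996weak} and placed at the head of the appendix purely as a tool (``The following result is used repeatedly in the proofs''); the authors then move directly to the proof of Theorem~\ref{theorem: NN distribution} without offering any argument for the quoted result. There is therefore nothing in the paper to compare your proposal against.

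For what it is worth, your sketch is a reasonable outline of the standard proof (and essentially the one in \cite{van1996weak} itself): establish asymptotic equicontinuity of $t\mapsto\mathbb{G}_n f_{n,t}$ via a bracketing maximal inequality, with the envelope-truncation hypotheses handling the large-jump part; obtain finite-dimensional convergence from the Lindeberg--Feller CLT, the Lindeberg condition being supplied by $\mathbb{E}(F_n^2 1_{\{F_n>\eta\sqrt n\}})\to 0$; and combine the two through the $\ell^\infty(T)$ characterization of weak convergence. But that is a comment on the original source, not on anything the present paper contributes.
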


\begin{proof}[Proof of Theorem \ref{theorem: NN distribution}]

The proof proceeds in three steps. First we derive the estimation rate $r_n$ of $f_n$. Then we show that a rescaled and shifted version of the empirical criterion function converges in distribution to a Gaussian process. Finally we apply the argmax continuous mapping theorem.

First we consider the estimation rate $r_n$ of $f_n$ which implies the tightness of the random sequence $h_n = r_n(f_n - f_0)$. \cite{chen1998sieve} derive an estimation rate with respect to the $L^2(P)$-metric on $\Theta$, $\|f - g\|^2 = \mathbb{E}_X[(f(X)-g(X))^2]$ of the neural network sieved M-estimator, i.e.  $\|f_n - f_0\| = O_P\big(\frac{1}{r_n}\big)$ with $K_n$ such that $K_n^{2+1/d}\log K_n = O(n)$. 

The second step is to show that a rescaled and shifted version of the empirical criterion function converges in distribution to a Gaussian process. We use a CLT for empirical processes on classes of functions changing with $n$, see Theorem 2.11.23 of \cite{van1996weak}.

Using the definition of the regression problem in \eqref{regression setting}, let us redefine the criterion function as $l_{g}(X, \epsilon) = 2(g-f_0)(X)\epsilon - (f_0 - g)^2(X)$. We now prove a CLT for the empirical processes defined on the class of function $\{r_n(l_{f_0 + h/r_n} - l_{f_0}): h \in K\}$ for every compact subset $K \subset \Theta$ by using \cite[Theorem 2.11.23]{van1996weak}. $K$ is a compact metric space with associated metric $\rho(s,t) = \|s-t\|_{L^2(P)}$ where $P$ denotes the joint law of $X$ and $\epsilon$.

We define 
\begin{equation*}
    f_{n,h} = r_n(l_{f_0 + h/r_n} - l_{f_0}) = 2h(X)\epsilon -\frac{h(X)^2}{r_n},
\end{equation*}
and the associated class of functions
\begin{equation*}
    \mathcal{F}_n = \{f_{n,h}: h \in K\}.
\end{equation*}

The envelope functions $F_n$ of $\mathcal{F}_n$ must be such that $\forall h \in K$, $f_{n,h}(x,\epsilon) \leq F_n(x,\epsilon)$, hence $F_n$ can be chosen as $(x, \epsilon) \to \sup_{h \in K} f_{n,h}(x, \epsilon)$. 
Using \cite[Lemma 2]{chen1998sieve}, we have that $||h||_{\infty} \lesssim ||h||_2^c$ where $c= \frac{2}{2+d}$. Since $K$ is a compact metric space in the $L^2$-norm, it is bounded which means that $\exists 1 < M < \infty$ such that $\forall h \in K$, $\|h\|_2 < M$.

Hence 
\begin{equation*}
    |f_{n,h}| \leq 2|\epsilon||h(x)| + \frac{1}{r_n}|h(x)|^2 \leq 2|\epsilon|M + \frac{1}{r_n}M^2,
\end{equation*}
and therefore, we can chose the envelope function as
\begin{equation*}
    F_n(x,\epsilon) = 2|\epsilon|M + \frac{2}{r_n}M^2,
\end{equation*}
where the additional factor 2 is added to simplify the computation of the metric entropy.

By definition of the envelope function and because we assume finite second moment for the regression error $\epsilon$, we have 
$$\mathbb{E}[F_n^2(X, \epsilon)] = O(1),$$ 
and 
$$\mathbb{E}(F_n^2 1_{\{F_n > \eta \sqrt{n}\}}) \to 0, \forall \eta > 0.$$

In addition,
\begin{align*}
(f_{n,h_1} - f_{n,h_2})^2 &= \Big[\Big(2\epsilon - \frac{1}{r_n}(h_1 + h_2)\Big)(h_1-h_2)\Big]^2 \leq \Big[\Big(2|\epsilon| + \frac{2}{r_n}M\Big)(h_1-h_2)\Big]^2.   
\end{align*}
By taking expectations, we obtain
$$\mathbb{E}[(f_{n,h_1} - f_{n,h_2})^2] \lesssim \mathbb{E}[(h_1-h_2)^2],$$ which implies that

$$\sup_{\rho(h_1,h_2)< \delta_n} \mathbb{E}(f_{n,h_1} - f_{n,h_2})^2 \leq O(\delta_n^2),$$

$$\sup_{\rho(h_1,h_2)< \delta_n} \mathbb{E}(f_{n,h_1} - f_{n,h_2})^2 \to 0, \forall \delta_n \downarrow 0.$$

Now, we need to find an upper bound on the metric entropy with bracketing of $\mathcal{F}_n$, 

$N_{[]}(\epsilon||F_n||_{P,2}, \mathcal{F}_n, L^2(P))$, to be able to prove that $$\int_0^{\delta_n} \sqrt{\log N_{[]}(\epsilon||F_n||_{P,2}, \mathcal{F}_n, L^2(P))}d\epsilon \to 0, \forall \delta_n \downarrow 0.$$

To do so, we will upper bound the metric entropy with bracketing associated with $\mathcal{F}_n$ with the metric entropy without bracketing of the initial function space $\Theta$.
Indeed, 
\begin{align*}
|f_{n,h_1} - f_{n,h_2}| &= |h_1-h_2|\Big|2\epsilon - \frac{1}{r_n}(h_1 + h_2)\Big|\\
& \leq |h_1-h_2|\Big(2|\epsilon| + \frac{2}{r_n}M\Big) = |h_1-h_2|F_n(x, \epsilon)\\
&\leq ||h_1-h_2||_{\infty}F_n(x, \epsilon).
\end{align*}
The last inequality implies that
\begin{equation}
\label{entropy inequality}
    N_{[]}(\epsilon||F_n||_{P,2}, \mathcal{F}_n, L^2(P)) \leq N\Big(\frac{\epsilon}{2}, \Theta, \|\cdot\|_{\infty}\Big).
\end{equation}

Thanks to \cite[Corollary 2, Corollary 4]{nickl2007bracketing} and because $\Theta$ as defined in \eqref{def function space} is a subset of the weighted Sobolev space $\textsc{H}^{s}_2(\mathbb{R}^d, \langle x \rangle^3)|\mathcal{X}$ restricted to the input space with $s = \lfloor \frac{d}{2}\rfloor+2$, we have that
\begin{equation}
\label{upper bound metric entropy}
    \log N\Big(\frac{\epsilon}{2}, \Theta, \|\cdot\|_{\infty}\Big) \lesssim \Big(\frac{1}{\epsilon}\Big)^{\frac{d}{s}}.
\end{equation}






By combining \eqref{entropy inequality} and \eqref{upper bound metric entropy}, we have that 
$$\int_0^1 \sqrt{\log N_{[]}(\epsilon||F_n||_{P,2}, \mathcal{F}_n, L^2(P))}d\epsilon < \infty,$$ for all $n$ which implies that
$$\int_0^{\delta_n} \sqrt{\log N_{[]}(\epsilon||F_n||_{P,2}, \mathcal{F}_n, L^2(P))}d\epsilon \to 0, \forall \delta_n \downarrow 0.$$

Now, we want to show that the sequence of covariance functions $Pf_{n,s}f_{n,t} - Pf_{n,s}Pf_{n,t}$ converges pointwise on $\Theta\times\Theta$.
$$Pf_{n,s}Pf_{n,t} = \mathbb{E}\Bigg[2s(X)\epsilon -\frac{s(X)^2}{r_n}\Bigg]\mathbb{E}\Bigg[2t(X)\epsilon -\frac{t(X)^2}{r_n}\Bigg] \to 0, $$ as $n \to \infty$
and
$$ Pf_{n,s}f_{n,t} = \mathbb{E} \Bigg[\Bigg(2s(X)\epsilon -\frac{s(X)^2}{r_n}\Bigg) \Bigg(2t(X)\epsilon -\frac{t(X)^2}{r_n}\Bigg) \Bigg] \to 4\sigma^2\mathbb{E}(s(X)t(X)),$$
hence the sequence of covariance functions converges pointwise to $4\sigma^2\mathbb{E}(s(X)t(X))$.

Now, since all hypotheses of Theorem \cite[Theorem 2.11.23]{van1996weak} are satisfied, we have that the sequence of empirical processes $\{\mathbb{G}_n f_{n,h}: h \in K\}$ is asymptotically tight in $l^{\infty}(K)$ and converges in distribution to a tight mean zero Gaussian process $\mathbb{G}$ with covariance $4\sigma^2\mathbb{E}(s(X)t(X))$.

We now need to show that all sample paths $h \to \mathbb{G}(h)$ are upper semi continuous and that the limiting Gaussian process $\{\mathbb{G}_f: f \in \Theta\}$ has a unique maximum at a random function $h^{\star}$.

Let's first note that the space $\Theta$ is $||\cdot||_{L_2(P)}$-compact as shown in \cite{freyberger2015compactness}. 

Concerning continuity of the sample paths of the Gaussian process, as stated in  \cite[Corollary 4.15]{adler1990introduction}, a sufficient condition of continuity of a centered Gaussian process with an index space $\Theta$ totally bounded with respect to the canonical metric $d$ induced by the process, $d(s,t) = \sqrt{\mathbb{E}[(\mathbb{G}_s - \mathbb{G}_t)^2]}$, is that $$\int_0^{\infty} \sqrt{\log N(\epsilon, \Theta, d)} d\epsilon < \infty.$$ 
 
Let's note that in our case the canonical metric induced by the process $d$ is proportional to the regular $||\cdot||_{L^2(P)}$ metric,
\begin{align*}
    d(s,t) &= \sqrt{\mathbb{E}[(\mathbb{G}_s - \mathbb{G}_t)^2]} \\
    &= \sqrt{\mathbb{E}[\mathbb{G}_s^2] + \mathbb{E}[\mathbb{G}_t^2] -2\mathbb{E}[\mathbb{G}_s\mathbb{G}_t]}\\
    &= \sqrt{4\sigma^2\mathbb{E}[s^2] + 4\sigma^2\mathbb{E}[t^2] -8\sigma^2\mathbb{E}[st]}\\
    &= 2\sigma||s-t||_{L^2(P)}.
\end{align*}

Therefore we need to show that
$$\log N(\epsilon, \Theta, L^2(P)) \lesssim \Big(\frac{1}{\epsilon}\Big)^\alpha,$$ with $\alpha < 2$.
This can also be proved thanks to \cite[Corollary 2, Corollary 4]{nickl2007bracketing} and by definition of $\Theta$ in \eqref{def function space}.

This proves the boundedness of the integral of the metric entropy defined under the $d$ metric. The total boundedness of the index space $\Theta$ is induced by its compactness. Hence $\{\mathbb{G}_f: f \in \Theta\}$ is continuous over a compact index space which implies that $\{\mathbb{G}_f: f \in \Theta\}$ reaches its maximum. 
 
Uniqueness of the maximum of a Gaussian process is given in \cite[Lemma 2.6]{kim1990cube} and requires continuity of sample paths, a compact index space and for every $s \neq t$, $\textsf{Var}(\mathbb{G}_s - \mathbb{G}_t) \neq 0$. This is satisfied in our case, indeed, $\textsf{Var}(\mathbb{G}_s - \mathbb{G}_t) = 4\sigma^2\mathbb{E}(s^2+ t^2) - 8\sigma^2\mathbb{E}(st) = 4\sigma^2\mathbb{E}((s-t)^2) \neq 0$.

In order to apply the argmax continuous mapping theorem, as stated in \cite[Theorem 3.2.2]{van1996weak}, we finally need to show that
\begin{equation}
\label{eq:max-criteria}
\mathbb{G}_{n}f_{n,h_n} \geq \sup_{h \in \Theta} \mathbb{G}_n f_{n, h} - o_P(1).
\end{equation}
Given the definition of $f_{n,h}$ and $\mathbb{G}_n f_{n,h}$, \eqref{eq:max-criteria} is equivalent to
\begin{equation}
\label{eq:final-eq}
\frac{r_n}{\sqrt{n}} \sum_{i=1}^n \Big(l_{f_n}(X_i, \epsilon_i) -\mathbb{E}[l_{f_n}(X,\epsilon)] \Big) \geq \sup_{h \in \Theta} \frac{r_n}{\sqrt{n}} \sum_{i=1}^n \Big(l_{f_0 + \frac{h}{r_n}}(X_i, \epsilon_i) - \mathbb{E}[l_{f_0 + \frac{h}{r_n}}(X,\epsilon)] \Big) -o_P(1).
\end{equation}
This condition is satisfied given Condition 4 of Theorem \ref{theorem: NN distribution}.

Finally, from the convergence of the empirical criterion function and the tightness of the random sequence $h_n$ previously shown, one can apply the argmax continuous mapping theorem as stated in \cite[Theorem 3.2.2]{van1996weak}.


This shows the convergence of $h_n$ to the argmax of the Gaussian process $h^{\star}$.
\end{proof}

\begin{proof}[Proof of Theorem \ref{theorem: test statistic distribution}]
The result follows from Theorem \ref{theorem: NN distribution} and the second order functional delta method \cite[Theorem 2]{romisch2005delta}.
We need to compute the Hadamard derivative of our functional of interest $\phi$ as defined in \eqref{functional def}. The second-order Hadamard directional derivative of a mapping between two normed space $\mathbb{D}$ and $\mathbb{F}$, $\phi: \mathbb{D}_0 \subseteq \mathbb{D} \to \mathbb{F}$ at $\theta_0$ in the direction $h$ tangential to $\mathbb{D}_0 $ is defined as
\begin{equation}
\label{Hadamard differentiable}
  \phi''_{\theta_0}(h) = \lim_{t \to 0, n \to \infty} \frac{\phi(\theta_0 + th_n) - \phi(\theta_0) - t\phi'_{\theta_0}(h_n)}{\frac{1}{2}t^2},  
\end{equation}
for every sequence $h_n \to h$ such that $\theta_0 + th_n$ is contained in the domain of $\phi$ for all small $t$.

In our case, the second derivative of $\phi$ at $\theta_0$ under the null hypothesis is equal to 
\begin{equation*}
    \phi^{''}_{\theta_0}(h) = 2\int_{\mathcal{X}}\Bigg(\frac{\partial h(x)}{\partial x_j}\Bigg)^2d\mu(x).
\end{equation*}
This completes the proof.
\end{proof}

\begin{proof}[Proof of Proposition \ref{corollary: empirical test statistic distribution}]
We simplify the notation as $$\hat{\lambda}^n_j = \mathbb{P}_n \eta(f_n),\qquad \lambda^n_j = P \eta(f_n),$$ where $P$ and $\mathbb{P}_n$ denote the expectation and the empirical expectation respectively, and $\eta(f) = \Big(\frac{\partial f}{\partial x_j}\Big)^2$. We have 
\begin{equation*}
     r_n^2\Big(\mathbb{P}_n \eta(f_n) - P \eta(f_0)\Big) = \\ \frac{r_n^2}{\sqrt{n}}\mathbb{G}_n\Big(\eta(f_n)-\eta(f_0)\Big) + \frac{r_n^2}{\sqrt{n}}\mathbb{G}_n\Big(\eta(f_0)\Big) + r^2_n\Big(P\eta(f_n) - P\eta(f_0)\Big),   
\end{equation*}
where $\mathbb{G}_n$ denotes the empirical process operator defined as $\mathbb{G}_n f =\sqrt{n}(\mathbb{P}_n - P f)$. 


Concerning the second term on the right-hand side, given the definition of the null hypothesis $H_0$, we have that $\mathbb{E}[\eta(f_0)] = 0$ which implies that $\frac{\partial f_0}{\partial x_j} = 0$ $P$-almost surely. Thanks to that, we have that $\frac{r_n^2}{\sqrt{n}}\mathbb{G}_n\Big(\eta(f_0)\Big) = 0$ $P$-almost surely.

The distribution of the last term $r^2_n\Big(P\eta(f_n) - P\eta(f_0)\Big)$ is given by Theorem \ref{theorem: test statistic distribution}. 



For the remaining term, we show that the empirical process $\{\mathbb{G}_n f_{n,h} : h \in K\}$ based on the family of functions $\{f_{n,h} = \eta(\theta_0 + h/r_n)-\eta(\theta_0), h \in K\}$ converges to a tight Gaussian process using \cite[Theorem 2.11.23]{van1996weak}. We have

\begin{equation*}
    f_{n,h} = \eta(\theta_0 + h/r_n)-\eta(\theta_0) = \frac{1}{r_n^2}\Big(\frac{\partial h}{\partial x_j}\Big)^2,
\end{equation*}
under the null hypothesis.
By definition of $\Theta$, $\exists M > 0$ such that $\forall h \in \Theta$, $\Big\|\frac{\partial h}{\partial x_j}\Big\|_2 \leq M$.
Because $\frac{\partial h}{\partial x_j}$ is $C^1$ and by \cite[Lemma 2]{chen1998sieve}, we have a uniform boundedness with the infinity norm, i.e. $\exists M > 0$ such that $\forall h \in K$, $\|\frac{\partial h}{\partial x_j}\|_{\infty} \leq M$.

We can then define the envelope function $F_n$ as
\begin{equation*}
    F_n(x) = 2\frac{M^2}{r_n^2},
\end{equation*}
which satisfies
$$\mathbb{E}(F_n^2) = O(1),$$
$$\mathbb{E}(F_n^21_{\{F_n > \eta\sqrt{n}\}}) \to 0,$$ for every $\eta > 0$. 

Since $f_{n,s} - f_{n,t} = \frac{1}{r_n^2}\Bigg[\Big(\frac{\partial s}{\partial x_j}\Big)^2-\Big(\frac{\partial t}{\partial x_j}\Big)^2\Bigg] \to 0$ for all $s,t \in \Theta \times \Theta$ we immediately have that
$$\sup_{\rho(s,t) < \delta_n} \mathbb{E}[(f_{n,s}-f_{n,t})^2] \to 0,$$ for every $\delta_n \to 0$.
Further,
$$|f_{n,s}-f_{n,t}| \leq \frac{1}{r_n^2}\Bigg[\Big(\frac{\partial s}{\partial x_j}\Big)^2-\Big(\frac{\partial t}{\partial x_j}\Big)^2\Bigg]\leq |F_n(x)|\Big|\frac{\partial s}{\partial x_j} - \frac{\partial t}{\partial x_j}\Big|.$$
Hence by the same argument used in the proof of Theorem \ref{theorem: NN distribution}, we obtain
\begin{equation*}
    N_{[]}(\epsilon||F_n||_{P,2}, \mathcal{F}_n, L^2(P)) \leq N\Big(\frac{\epsilon}{2}, \Theta', \|\cdot\|_{\infty}\Big),
\end{equation*}
where $\Theta' = \{\frac{\partial f}{\partial x_j}: f \in \Theta\}$. 

By definition of $\Theta$ and thanks to \cite[Corollary 2 and 4]{nickl2007bracketing} we have that 
$$\log N_{[]}(\epsilon||F_n||_{P,2}, \mathcal{F}_n, L^2(P)) \lesssim \Bigg(\frac{1}{\epsilon}\Bigg)^{\gamma},$$
with $\gamma < 2$.

This last condition ensures that
$$\int_0^{\delta_n} \sqrt{\log N_{[]}(\epsilon||F_n||_{P,2}, \mathcal{F}_n, L^2(P))}d\epsilon \to 0, \forall \delta_n \downarrow 0.$$

By \cite[Theorem 2.11.23]{van1996weak}, we obtain that $\mathbb{G}_n\Big(\eta(\hat{\theta}_n)-\eta(\theta_0)\Big)$ is asymptotically tight.

Then, $\frac{r_n^2}{\sqrt{n}}\mathbb{G}_n\Big(\eta(\hat{\theta}_n)-\eta(\theta_0)\Big)$ converges to 0 in probability. By Slutsky's theorem, we finally get that the limiting distribution of $\hat{\lambda}^n_j$ is the same as that of $\lambda^n_j$.
\end{proof}

\begin{proof}[Proof of Theorem \ref{theorem: new expression limiting distribution}]

The new expression of the limiting distribution is obtained by considering an orthonormal decomposition of the Gaussian process. This is then used to express the argmax of the Gaussian process as the maximizer of a dot product over a ball.

Recall that in our case, the limiting Gaussian process is centered and has covariance $\textsl{Cov}(\mathbb{G}_f,\mathbb{G}_g) = 4\sigma^2\mathbb{E}(fg)$, which is the scaled inner product $\langle f, g \rangle = \mathbb{E}(fg)$ in the Hilbert space $L_2(P)$. If we consider an orthonormal basis $\{\phi_i\}_{i=1}^{\infty}$ of the Hilbert space, then $f = \sum_{i=1}^{\infty} \langle f, \phi_i \rangle \phi_i$, $\forall f \in \Theta$. 

By linearity of the Gaussian process, we have that
\begin{equation}
\label{decomposition gaussian process 1}
    \mathbb{G}_f = \sum_{i=1}^{\infty} \langle f, \phi_i \rangle \mathbb{G}_{\phi_i}.
\end{equation}
The linearity of $\mathbb{G}$ is proved as follows: $\mathbb{G}_{f+ \lambda g} \sim \mathcal{N}(0, 4\sigma^2\mathbb{E}[(f + \lambda g)^2])$ by definition of $\mathbb{G}$ and $\mathbb{G}_{f} + \lambda \mathbb{G}_{g} \sim \mathcal{N}(0, 4\sigma^2\mathbb{E}[f^2]+\lambda^2 4\sigma^2\mathbb{E}[g^2] + \lambda 8\sigma^2\mathbb{E}(fg))$, hence $\mathbb{G}_{f+ \lambda g}$ and $\mathbb{G}_f + \lambda \mathbb{G}_g$ are equal in distribution. 

Because $\{\phi_i\}_{i=1}^{\infty}$ is an orthonormal basis, 
\begin{equation}
\label{distribution gaussian process at basis}
    \mathbb{G}_{\phi_i} \sim \mathcal{N}(0, 4\sigma^2),
\end{equation}
and $\forall i \neq j$, we have $\textsl{Cov}(\mathbb{G}_{\phi_i},\mathbb{G}_{\phi_j}) = 0$. This, combined with the fact that $\mathbb{G}_{\phi_i}$ and $\mathbb{G}_{\phi_j}$ are jointly normal, implies that $\mathbb{G}_{\phi_i}$ is independent of $\mathbb{G}_{\phi_j}$.

Hence, by combining \eqref{decomposition gaussian process 1} and \eqref{distribution gaussian process at basis}, we obtain the following decomposition of the Gaussian process $\forall f \in \Theta$,
\begin{equation}
    \label{decomposition gaussian process 2}
    \sum_{i=1}^{\infty} \langle f, \phi_i \rangle \xi_i,
\end{equation}
where $\{\xi_i\}_{i=1}^{\infty}$ is an infinite sequence of independent normal variables such that $\xi_i \sim \mathcal{N}(0,4\sigma^2)$.

Our objective is to estimate the distribution of the argmax $h^{\star}$ of the previous Gaussian process. The argmax is defined as a random function $\Omega \to \Theta$ such that $\forall \omega \in \Omega$, $h^{\star}_{\omega}$ satisfies $\mathbb{G}_{h^{\star}_{\omega}}(\omega) \geq \mathbb{G}_f(\omega)$, $\forall f \in \Theta$.
The previous expansion of the Gaussian process gives us one way to compute the argmax for each $\omega$. Indeed, for every $\omega \in \Omega$ which corresponds to a sample from the infinite sequence of independent normal variables $\{\xi_i\}_{i=1}^{\infty}$, let's define $\Phi_{\xi}: \Theta \to \mathbb{R}$ as $\Phi_{\xi}(f) = \mathbb{G}_f(\omega) = \sum_{i=1}^{\infty} \langle f, \phi_i \rangle \xi_i$. $\Phi_{\xi}$ is a bounded linear functional on the Hilbert space $L^2(P)$, hence by Riesz representation theorem, $\exists \phi_{\xi} \in L^2(P)$ such that $G_f(\omega) =  \langle f, \phi_{\xi} \rangle_{L_2(P)}$. Hence, finding $f \in \Theta$ that maximizes $\mathbb{G}_f(\omega)$  is equivalent of finding $f \in \Theta$ that maximizes the inner product $\langle f, \phi_{\xi} \rangle_{L^2(P)}$. 

By associating to any $f \in L^2(P)$ its corresponding infinite sequence $\{\langle f,\phi_i\rangle\}_{i \in \mathbb{N}}$, one can identify $L^2(P)$ with the $l^2$ space of infinite sequences with bounded 2-norm. Hence, \eqref{stable basis} shows that the Sobolev norm $\|\cdot\|_{\mathbb{H}^k(P)}$ is a weighted version of the $l^2$ norm with weights $d_i^2$. 

Thanks to the previous observation, finding $f \in \Theta$ that maximizes $\mathbb{G}_f(\omega)$ can be re-expressed as solving the following optimization problem,
\begin{equation*}
   \max_{\{\langle f, \phi_i \rangle\}_{i \in \mathbb{N}}} \langle f, \phi_{\xi} \rangle_{l^2} = \sum_{i=0}^{\infty} \langle f, \phi_i \rangle \langle \phi_{\xi}, \phi_i \rangle,
\end{equation*}
subject to $\|Df\|^2_{l^2} =  \sum_{i=0}^{\infty} d_i^2\langle f, \phi_i \rangle^2 \leq B$.

By defining $g = Df$, the optimization problem above can be reformulated as
$$\max \langle g, D^{-1}\phi_{\xi} \rangle_{l^2},$$ subject to $\|g\|_{l^2}  \leq B$ whose solution by Cauchy-Schwartz inequality obviously is
$$g^{\star} = \frac{B}{\|D^{-1}\phi_{\xi}\|_{L^2(P)}}D^{-1}\phi_{\xi}.$$

Because $f = D^{-1} g$ and $\phi_{\xi} = \sum_{i=0}^{\infty} \xi_i \phi_i$, the argmax can be computed explicitly as
\begin{equation*}
    h^{\star}_{\omega} = \sum_{i = 0}^{\infty} \frac{B}{\|D^{-1}\phi_{\xi}\|}\frac{\xi_i}{d_i^2} \phi_i,
\end{equation*}
where $\|D^{-1}\phi_{\xi}\|^2 = 4\sigma^2\sum_{i=0}^{\infty} \frac{\chi_i^2}{d_i^2}$ and $\{\chi_i^2\}_{i \in \mathbb{N}}$ are i.i.d. samples from a chi-square distribution. 

Given the argmax $h^{\star}$ and because of \eqref{basis function derivative}, we can compute the associated statistic $$\mathbb{E}_X\Bigg(\frac{\partial h^{\star}(X)}{\partial x_j}^2\Bigg) = \Bigg\|\frac{\partial h^{\star}(X)}{\partial x_j}\Bigg\|^2_{L^2(P)} = \frac{B^2}{\sum_{i=0}^{\infty} \frac{\chi_i^2}{d_i^2}}\sum_{i = 0}^{\infty} \frac{\alpha_{i,j}^2}{d_i^4} \chi_{i}^2.$$
This completes the proof.
\end{proof}

\section{Variables for House Price Valuation}\label{table: variables descriptions}

\small
\begin{longtable}{| p{0.46\textwidth} | p{0.47\textwidth} |}
\hline
\textbf{Variable Name} & \textbf{Variable Description} \\ \hline
\multicolumn{2}{|c|}{\textbf{House Variables}}\\ \hline
Bedrooms &	Number of bedrooms\\ \hline
Full\_Baths	& Number of bathrooms\\ \hline
Last\_Sale\_Amount &	Previous sale price\\ \hline
SqFt	& Square footage\\ \hline
Stories\_Number	& Number of stories\\ \hline
Time\_Since\_Prior\_Sale &	Time since prior sale\\ \hline
Year\_Built & Year of construction \\ \hline

\multicolumn{2}{|c|}{\textbf{Mortgage Variables}}\\ \hline

Last\_Mortgage\_Amount & Amount of the mortgage of the last sale \\ \hline
Last\_Mortgage\_Interest\_Rate & Interest rate of the mortgage\\ \hline

\multicolumn{2}{|c|}{\textbf{Tax Variables}}\\ \hline

Tax\_Amount & The Total Tax amount provided by the county or local taxing/assessment authority. \\ \hline
Tax\_Assd\_Improvement\_Value & The Assessed Improvement Values as provided by the county or local taxing/assessment authority. \\ \hline
Tax\_Assd\_Land\_Value & The Assessed Land Values as provided by the county or local taxing/assessment authority.
 \\ \hline
Tax\_Assd\_Total\_Value & The Total Assessed Value of the Parcel's Land and Improvement values as provided by the county or local taxing/assessment authority.
 \\ \hline
Tax\_Improvement\_Value\_Calc & The "IMPROVEMENT" Value closest to current market value used for assessment by county or local taxing authorities.
 \\ \hline
Tax\_Land\_Square\_Footage & Total land mass in Square Feet.
 \\ \hline
Tax\_Land\_Value\_Calc & The "LAND" Value closest to current market value used for assessment by county or local taxing authorities.  \\ \hline
Tax\_Total\_Value\_Calc & The "TOTAL" (i.e., Land + Improvement) Value closest to current market value used for assessment by county or local taxing authorities. 
\\ \hline

\multicolumn{2}{|c|}{\textbf{Foreclosure Variables}}\\ \hline

N\_Foreclosures & How many foreclosures occurred in that given month in that given CBSA \\ \hline
N\_Outstanding\_Foreclosures\_SAF & Number of properties that have already been foreclosed upon but have not yet been sold to a party other than the original lender for the given CBSA in the given month. \\ \hline
N\_Outstanding\_Foreclosures\_NonInst\_SAF & Same as above but for non-institutional / non-corporate owner SAF. \\ \hline 
Foreclosures\_To\_Loans\_Ratio & Ratio of N\_Outstanding\_Foreclosures\_SAF and N\_Outstanding\_Loans \\ \hline

\multicolumn{2}{|c|}{\textbf{Other Variables}}\\ \hline

Sale\_Month\_No & Month of the sale \\ \hline
N\_Past\_Sales & Number of past sales \\ \hline

\multicolumn{2}{|c|}{\textbf{Mortgage Rate Variables}}\\ \hline

Freddie\_30yrFRM\_5/1ARM\_Spread\_PriorSale & Spread between Freddie Mac 30-year fixed mortgage rate and 5/1 hybrid amortizing adjustable-rate mortgage (ARM) \\ \hline
Freddie\_5/1\_ARM\_Rate\_PriorSale & Freddie Mac 5/1 hybrid amortizing ARM initial coupon rate \\ \hline
Freddie\_5/1\_ARM\_margin\_PriorSale & Freddie Mac 5/1 hybrid amortizing ARM margin \\ \hline
Freddie\_5/1\_feespts\_PriorSale & Freddie Mac 5/1 hybrid amortizing ARM fees and points \\ \hline
Freddie\_Regional\_NC\_1yrARM\_PriorSale & Freddie Mac 1-Year ARM North Central \\ \hline
Freddie\_Regional\_NE\_1yrARM\_PriorSale & Freddie Mac 1-Year ARM North East \\ \hline
Freddie\_Regional\_SE\_1yrARM\_PriorSale & Freddie Mac 1-Year ARM South East \\ \hline
Freddie\_Regional\_SW\_1yrARM\_PriorSale & Freddie Mac 1-Year ARM South West \\ \hline
Freddie\_Regional\_W\_1yrARM\_PriorSale & Freddie Mac 1-Year ARM West \\ \hline
Freddie\_US\_15yr\_FRM\_PriorSale & Freddie Mac 15-Year fixed mortgage rate\\ \hline
Freddie\_US\_15yr\_feespts\_PriorSale & Freddie Mac 15-Year fixed mortgage rate fees and points \\ \hline
Freddie\_US\_1yrARM\_Fees\_PriorSale & Freddie Mac 1-Year ARM fees and points \\ \hline
Freddie\_US\_1yrARM\_Margin\_PriorSale & Freddie Mac 1-Year ARM margin \\ \hline
Freddie\_US\_1yrARM\_PriorSale & Freddie Mac 1-Year ARM \\ \hline
Freddie\_US\_30yr\_FRM\_PriorSale & Freddie Mac 30-Year fixed mortgage rate \\ \hline
Freddie\_US\_30yr\_feespts\_PriorSale & Freddie Mac 30-Year fixed mortgage rate fees and points \\ \hline

\multicolumn{2}{|c|}{\textbf{Zip-code Variables}}\\ \hline

N\_ShortSales & Number of short sales that occurred within the same zipcode as the property within the past 12 months. \\ \hline
N\_PrePayments & Number of loans that have been prepaid \\ \hline
N\_Outstanding\_Loans & How many loans are currently outstanding \\ \hline
N\_PIF & Number of loans that were paid in full by reaching their term maturity \\ \hline
N\_Originations	& Number of loans originated at same location and same time period of sale\\ \hline
Median\_Time\_to\_SAF & Median time between the original foreclosure month and the SAF month of all SAF that occurred in the given month. \\ \hline
Median\_Time\_to\_NonInst\_SAF & Same as above but for non-institutional / non-corporate owner SAF. \\ \hline
Median\_Time\_to\_SAF\_l12 & Same as above except that computation rolls over the 12 preceding months.\\ \hline
Median\_Time\_to\_NonInst\_SAF\_l12 & Same as above except that computation rolls over the 12 preceding months. \\ \hline
Median\_time\_since\_foreclosure\_SAF & Median time between the current month and when the foreclosure originally occurred of all properties that have already been foreclosed upon but have not yet been sold.\\ \hline
Median\_time\_since\_foreclosure\_NonInst\_SAF & Same as above but for non-institutional / non-corporate owner sales after foreclosure. \\ \hline
Median\_SAF\_PriceChange & Median difference between price of the SAF and last recorded sale price of the property for all SAF that occurred in current month. \\ \hline
Median\_NonInst\_PriceChange & Same as above but for non-institutional / non-corporate owner SAF. \\ \hline
Median\_SAF\_PriceChange\_l12 & Same as above except that computation rolls over the 12 preceding months. \\ \hline
Median\_NonInst\_PriceChange\_l12 & Same as above except that computation rolls over the 12 preceding months. \\ \hline
Median\_SAF\_PriceChange\_pct & Same as above except that the price change is expressed as a percentage, i.e. the price change divided by the previous sale price. \\ \hline
Median\_NonInst\_PriceChange\_pct & Same as above except that the price change is expressed as a percentage. \\ \hline
Median\_SAF\_PriceChange\_pct\_l12 & Same as above except that the price change is expressed as a percentage. \\ \hline
Median\_NonInst\_PriceChange\_pct\_l12 & Same as above except that the price change is expressed as a percentage. \\ \hline
Median\_SAF\_PriceChange\_Model & Median difference between price of the SAF and a modeled sale price of the property (last sale price $*[1+$ zip code level House Price Index precent change since last sale$]$) for all SAF that occurred in current month.\\ \hline
Median\_NonInst\_PriceChange\_Model & Same as above but for non-institutional / non-corporate owner SAF. \\ \hline
Median\_SAF\_PriceChange\_Model\_l12 & Same as above except that computation rolls over the 12 preceding months. \\ \hline
Median\_NonInst\_PriceChange\_Model\_l12 & Same as above except that computation rolls over the 12 preceding months. \\ \hline
Median\_SAF\_PriceChange\_Model\_pct & Same as above except that the price change is expressed as a percentage, i.e. the price change divided by the previous sale price. \\ \hline
Median\_SAF\_PriceChange\_Model\_pct\_l12 & Same as above except that the price change is expressed as a percentage. \\ \hline
Median\_NonInst\_SAF PriceChange\_Model\_pct & Same as above except that the price change is expressed as a percentage. \\ \hline 
Median\_NonInst\_SAF PriceChange\_Model\_pct\_l12 & Same as above except that the price change is expressed as a percentage. \\ \hline
Time\_to\_NonInst\_to\_Outstanding\_Loans & Ratio of Median\_Time\_to\_NonInst\_SAF\_l12 variable over N\_Outstanding\_Loans \\ \hline
\end{longtable}


\newpage

\vskip 0.2in
\bibliography{biblio_NN.bib}

\end{document}